\newtheorem{Thm}{Theorem}[section]
\newtheorem{Cor}[Thm]{Corollary}
\newtheorem{Conj}[Thm]{Conjecture}
\newtheorem{Prop}[Thm]{Proposition}
\newtheorem{Lem}[Thm]{Lemma}
\newtheorem*{thma}{Theorem A}
\newtheorem*{thmb}{Theorem B}
\theoremstyle{definition}
\newtheorem{Def}[Thm]{Definition}
\newtheorem{Ex}[Thm]{Example}
\theoremstyle{remark}
\numberwithin{equation}{section}
\newcommand{\Aut}{\operatorname{Aut}}
\newcommand{\Hom}{\operatorname{Hom}}
\newcommand{\Mor}{\operatorname{Mor}}
\newcommand{\Id}{\operatorname{Id}}
\newcommand{\Syl}{\operatorname{Syl}}
\newcommand{\Iso}{\operatorname{Iso}}
\newcommand{\Out}{\operatorname{Out}}
\newcommand{\Ob}{\operatorname{Ob}}
\renewcommand{\Gamma}{\varGamma}
\renewcommand{\epsilon}{\varepsilon}
\renewcommand{\bar}{\overline}
\renewcommand{\leq}{\leqslant}
\renewcommand{\geq}{\geqslant}
\newcommand{\D}{\mathcal{D} }
\newcommand{\F}{\mathcal{F}}
\renewcommand{\L}{\mathcal{L}}
\newcommand{\U}{\mathcal{U}}
\newcommand{\E}{\mathcal{E}}
\newcommand{\C}{\mathcal{C}}
\begin{document}


\title{Centralizers of Subsystems of Fusion Systems}
 

\author{J. Semeraro}
\address{Heilbronn Institute for Mathematical Research, Department of Mathematics, University of Bristol, U.K.}
\email{js13525@bristol.ac.uk}
\maketitle 


%


\begin{abstract}
When $(S,\F,\L)$ is a $p$-local finite group and $(T,\E,\L_0)$ is weakly normal in $(S,\F,\L)$ we show that a definition of $C_S(\E)$ given by Aschbacher has a simple interpretation from which one can deduce existence and strong closure very easily. We also appeal to a result of Gross to give a new proof that there is a unique fusion system $C_\F(\E)$ on $C_S(\E)$.
\end{abstract}
\section{Introduction}
Let $p$ be prime. By a \textit{normal pair} $((S,\F),(T,\E))$ of saturated fusion systems we will mean that $\F$ is a saturated fusion system on a finite $p$-group $S$ and $\E$ is a normal subsystem of $\F$ on $T \leq S$.   The centralizer fusion system $C_\F(\E)$ was first considered by Aschbacher in \cite[Chapter 6]{A2} and is a fusion system of index a power of $p$ in $C_\F(T)$ on $C_S(\E)$, the largest subgroup $X$ of $C_S(T)$ with the property that $\E \subseteq C_\F(X).$ While this definition is natural and easy-to-state, the proof of uniqueness is rather technical and relies on several of the tools developed in  \cite{A1} and earlier chapters of \cite{A2}. The purpose of this note is twofold. Firstly, we show that $C_S(\E)$ may be regarded as the centralizer of a \textit{linking system} of $\E$, where existence, uniqueness and strong closure follow almost trivially. Secondly, we apply Aschbacher's characterization of $C_S(\E)$ as an intersection of local subsystem centralizers in order to give a new proof of the existence of $C_\F(\E)$.

\subsection{Main results and structure of the paper.}
Recall that a $p$-local finite group is a triple $(S,\F,\L)$ where $S$ is a finite $p$-group, $\F$ is a saturated fusion system on $S$ and $\L$ is a linking system associated to $\F$ (see \cite[Definition III.4.4]{AKO}). When $((S,\F),(T,\E))$ is a weakly normal pair of fusion systems, and $\L_0 \subseteq \L$ are the associated linking systems, $\L_0$ is weakly normal in $\L$ if a certain pair of conditions hold, mirroring the weak normality conditions for fusion systems \cite[Definition III.4.12]{AKO}. This definition naturally gives rise to the concept of a weakly normal pair  
$((S,\F,\L), (T,\E,\L_0))$ of $p$-local finite groups. We prove the following result in § \ref{s:centlink}, which may be regarded as a generalization of \cite[Lemma 1.14]{AOV}:

\begin{thma}\label{weaklythm}
Let $((S,\F,\L), (T,\E,\L_0))$ be a weakly normal pair of $p$-local finite groups. Then $C_S(\E)$ fits into an exact sequence 
$$\begin{CD}
1 @>>> C_S(\E) @>>> \Aut_\L(T) @>>> \Aut_{typ}^I(\L_0) @>>> \hspace{20mm}   \\
\end{CD}$$
$$\begin{CD}
\hspace{80mm} \Out_{typ}^\L(\L_0) @>>> 1 \\
\end{CD}.$$  In particular, $C_S(\E)$ is a well-defined strongly $\F$-closed subgroup of $S$.
\end{thma}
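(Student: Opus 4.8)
The plan is to exhibit the four-term sequence as the one attached to a single conjugation homomorphism $c \colon \Aut_\L(T) \to \Aut_{typ}^I(\L_0)$ and then to identify $\Ker(c)$ with $C_S(\E)$. First I would construct $c$: for $\gamma \in \Aut_\L(T)$, conjugation by $\gamma$ inside the category $\L$ sends morphisms of $\L_0$ to morphisms of $\L$, and the weak normality of $\L_0$ in $\L$ is exactly the hypothesis guaranteeing that this self-equivalence preserves $\L_0$ and is isotypical and inclusion-preserving, so that $c_\gamma$ lies in the designated subgroup $\Aut_{typ}^I(\L_0)$. That $\gamma \mapsto c_\gamma$ is a homomorphism is formal. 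Exactness at the last two terms is then essentially definitional, since $\Out_{typ}^\L(\L_0)$ is by construction the cokernel $\Aut_{typ}^I(\L_0)/\Im(c)$; I would only check that the final quotient map is well defined and surjective, which is immediate.

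The substantive step is exactness at $\Aut_\L(T)$, i.e. the identity $\Ker(c) = \delta_T(C_S(\E))$. Since $T$ is an object of $\L$ it is $\F$-centric, so $C_S(T) = Z(T) \leq T$ and the distinguished monomorphism extends to an injective homomorphism $\delta_T \colon N_S(T) = S \to \Aut_\L(T)$; injectivity holds because any kernel element lies in $C_S(T) = Z(T) \leq T$, where $\delta_T$ is already a monomorphism. This gives exactness at $C_S(\E)$ at once. Now $\delta_T(T) \subseteq \Aut_{\L_0}(T)$, so any $\gamma$ with $c_\gamma = \Id$ centralizes $\delta_T(T)$, and the linking-system axioms force $\gamma \in \delta_T(C_S(T))$: a morphism centralizing $\delta_T(T)$ projects under $\pi$ to an element centralizing $T$, and the $p'$-indeterminacy in $\Aut_\L(T)$ collapses because $C_S(T) = Z(T)$ is a $p$-group. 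This is precisely the mechanism by which passing to the linking system removes the $p'$-parts of centralizers that obstruct the naive computation in $\Aut_\F(T)$. Writing $\gamma = \delta_T(x)$ with $x \in C_S(T)$, the remaining demand that $c_\gamma$ act trivially on all of $\L_0$ unwinds to Aschbacher's condition $\E \subseteq C_\F(x)$, i.e. $x \in C_S(\E)$, and conversely. Matching ``trivial action on $\L_0$'' with the defining property of $C_S(\E)$ is the main obstacle, and it is exactly here that reading $C_S(\E)$ as the centralizer of the linking system $\L_0$ pays off.

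Granting the sequence, well-definedness of $C_S(\E)$ is immediate: it is the $\delta_T$-preimage of the genuine normal subgroup $\Ker(c) \trianglelefteq \Aut_\L(T)$, so the largest subgroup of $C_S(T)$ centralizing $\E$ exists and equals this preimage rather than merely existing as a maximal element. For strong closure I would use the same invariance principle: weak normality makes $\L_0$ invariant under conjugation by morphisms of $\L$, and the centralizer of an invariant subcategory is invariant under that action; since strong $\F$-closure is precisely invariance under $\F$-conjugation (equivalently, conjugation by $\L$-morphisms), this yields strong closure almost for free. Concretely, for $x \in C_S(\E)$ and an $\F$-morphism $\varphi$, axiom (C) of the linking system realizes $\delta(\varphi(x))$ as an $\L$-conjugate of $\delta(x) \in \Ker(c)$, and normality of $\Ker(c)$ keeps it inside $\Ker(c)$, whence $\varphi(x) \in C_S(\E)$. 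The only point requiring care is reducing to morphisms that can be transported through $\Aut_\L(T)$, which is handled by restricting to $\F$-maps among subgroups of $Z(T)$ and invoking the strong closure of $T$.
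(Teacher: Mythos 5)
Your skeleton is the same as the paper's (build the conjugation homomorphism $c\colon \Aut_\L(T)\to\Aut_{typ}^I(\L_0)$, identify its kernel with $C_S(\E)$, deduce strong closure from conjugation-invariance), but the proposal has genuine gaps at exactly the points where the work lies. The first is your treatment of the right-hand end of the sequence: you declare exactness at $\Aut_{typ}^I(\L_0)$ and surjectivity onto $\Out_{typ}^\L(\L_0)$ to be ``essentially definitional'' on the grounds that $\Out_{typ}^\L(\L_0)$ is by construction the cokernel of $c$. It is not. The paper defines $\Out_{typ}^\L(\L_0)$ as the set of classes of \emph{all} isotypical self-equivalences $\Aut_{typ}(\L_0)$ (not just inclusion-preserving ones) under $\L$-natural isomorphism. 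With that definition, surjectivity is the assertion that every isotypical equivalence is $\L$-naturally isomorphic to one sending inclusions to inclusions (the paper quotes \cite[Lemma III.4.9]{AKO} for this), and exactness at $\Aut_{typ}^I(\L_0)$ is the assertion that an $\alpha\in\Aut_{typ}^I(\L_0)$ which is $\L$-naturally isomorphic to $\Id_{\L_0}$ equals $c_\gamma$ for some $\gamma\in\Aut_\L(T)$. The latter requires an argument: given components $\eta_P\in\Iso_\L(P,\alpha(P))$ of the natural isomorphism, isotypicality and preservation of inclusions give $\alpha(\iota_P^T)=\iota_{\alpha(P)}^T$, hence $\eta_P=\eta_T|_{P,\alpha(P)}$, hence $\alpha=c_{\eta_T}$. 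By positing the cokernel description you have assumed away precisely this content of the theorem.

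The second gap is your claim that since $T\in\Ob(\L)$ it is $\F$-centric, so $C_S(T)=Z(T)\le T$. This is false in the relevant generality (objects of the linking systems used here need not be $\F$-centric), and it would collapse the theorem: since $C_S(\E)\le C_S(T)$ always, your claim forces $C_S(\E)\le Z(T)$, whereas the entire interest of the construction is in cases where $C_S(T)\not\le T$ (for instance $\F=\F_1\times\F_2$ with $\E=\F_1$, where $C_S(\E)$ contains the Sylow subgroup of the second factor). What the kernel computation actually needs is Axiom (A2) for $\L$, which identifies the fiber of $\pi$ over $\Id_T$ inside $\Aut_\L(T)$ with $\delta_T(C_S(T))$ with no centricity hypothesis; your worry about ``$p'$-indeterminacy'' is a red herring, as that fiber is a $p$-group by (A2) automatically. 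Moreover, the heart of the proof --- that $\gamma=\delta_T(a)$ with $c_\gamma=\Id_{\L_0}$ forces $\E\subseteq C_\F(\langle a\rangle)$ --- is exactly the step you dismiss as ``unwinds to Aschbacher's condition.'' The paper proves it: each $\psi\in\Mor_{\L_0}(P,Q)$ commutes with $\delta(a)$, hence by \cite[Proposition 4.3(b)]{AKO} extends to $\bar{\psi}\in\Mor_\L(\langle P,a\rangle,\langle Q,a\rangle)$ with $\pi(\bar{\psi})(a)=a$, so every morphism of $\E$ extends to an $\F$-morphism fixing $a$. Finally, your strong-closure argument inherits the centricity error (``restricting to $\F$-maps among subgroups of $Z(T)$''), and normality of $\Ker(c)$ in $\Aut_\L(T)$ only controls conjugation by elements of $\Aut_\L(T)$, not by an arbitrary $\varphi\in\Hom_\F(P,S)$ with $a\in P$; the paper instead lifts $\varphi$ to $\psi\in\Hom_\L(P,S)$ and uses Axiom (C) together with the $\L$-invariance of $\L_0$ to get $c_{\delta_S(a\varphi)}=c_\psi^{-1}\circ\Id_{\L_0}\circ c_\psi=\Id_{\L_0}$.
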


The definitions of $\Aut_\L(T)$, $\Aut_{typ}^I(\L_0)$ and $\Out_{typ}^\L(\L_0)$ are all provided or referenced in § \ref{s:centlink}.  
A remarkable feature of Theorem A is that, in contrast to the corresponding result of \cite[Chapter 6]{A2} for fusion systems, the proof is very short and follows quickly from the definitions.

Our next main result, presented in § \ref{s:centfus}, is a new proof of the existence of the fusion system centralizer $C_\F(\E)$. Recall that that the \textit{hyperfocal} subgroup, $\mathfrak{hyp}(\F)$ of a saturated fusion system $\F$ has the property that whenever $\mathfrak{hyp}(\F) \leq R \leq S$, there is a unique saturated subsystem $\F_R$ of $\F$ on $R$ contained in $\F$  at index a power of $p$ (see \cite[Theorem 4.3]{BCGLO2}). We use this fact to prove the following:

\begin{thmb}
Let $((S,\F), (T,\E))$ be a normal pair of saturated fusion systems. Then $$\mathfrak{hyp}(C_\F(T)) \leq C_S(\E) \leq C_S(T).$$ In particular, there is a unique saturated subsystem $C_\F(\E)$ on $C_S(\E)$ contained in $C_\F(T)$ at index a power of $p$.
\end{thmb}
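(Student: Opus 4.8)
The plan is to derive the ``in particular'' conclusion from the two displayed inclusions together with the hyperfocal subgroup theorem, so that the real work is to establish the inclusions themselves. First I would record that $C_\F(T)$ is a saturated fusion system on $C_S(T)$: since $\E \trianglelefteq \F$ forces $T$ to be fully normalized and fully centralized in $\F$, the centralizer $C_\F(T)$ is saturated by the standard theory of centralizer subsystems (see \cite{AKO}), and hence $\mathfrak{hyp}(C_\F(T))$ is defined and is a subgroup of $C_S(T)$. Granting the inclusions $\mathfrak{hyp}(C_\F(T)) \leq C_S(\E) \leq C_S(T)$, the hyperfocal subgroup theorem \cite[Theorem 4.3]{BCGLO2} applied to $C_\F(T)$ with $R = C_S(\E)$ produces a unique saturated subsystem of $C_\F(T)$ on $C_S(\E)$ of index a power of $p$; this is the desired $C_\F(\E)$.

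The right-hand inclusion $C_S(\E) \leq C_S(T)$ is immediate, since $C_S(\E)$ is defined in \cite[Chapter 6]{A2} as the largest subgroup $X$ of $C_S(T)$ with $\E \subseteq C_\F(X)$. For the left-hand inclusion I would argue from the maximality built into that definition: it suffices to show $\E \subseteq C_\F(\mathfrak{hyp}(C_\F(T)))$. Using the generator description $\mathfrak{hyp}(C_\F(T)) = \langle\, [V, O^p(\Aut_{C_\F(T)}(V))] : V \leq C_S(T)\,\rangle$ and the fact that $C_S(\E)$ is a subgroup, this reduces to proving, for each $V \leq C_S(T)$, that $W := [V, O^p(\Aut_{C_\F(T)}(V))] \leq C_S(\E)$; equivalently (again by maximality, and noting $W \leq V \leq C_S(T)$) that $\E \subseteq C_\F(W)$, i.e. that every morphism of $\E$ extends to an $\F$-morphism restricting to the identity on $W$.

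The heart of the matter is therefore to show that the ``$p'$-part'' of the fusion of $C_\F(T)$ acts trivially on $\E$. Fix $V \leq C_S(T)$ and $\alpha \in \Aut_{C_\F(T)}(V)$; by definition of the centralizer fusion system $\alpha$ lifts to $\hat\alpha \in \Aut_\F(VT)$ with $\hat\alpha|_T = \Id_T$. Because $\E$ is $\F$-invariant (part of $\E \trianglelefteq \F$) and $\hat\alpha$ fixes $T$ pointwise, conjugation by $\hat\alpha$ preserves $\E$ and fixes each of its morphisms. I would leverage this, via a coprime-action argument on the group of automorphisms of $VT$ fixing $T$, to convert the hypothesis $\alpha \in O^p(\Aut_{C_\F(T)}(V))$ into the statement that the commutator elements $v^{-1}\alpha(v)$ centralize $\E$. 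Concretely, I would pass to a realizing transporter or linking system, in which $\Aut_\F(VT)$ becomes a genuine automorphism group of a finite group, and invoke the theorem of Gross on $p'$-automorphisms that centralize a Sylow $p$-subgroup to conclude that the $O^p$-part of the action respects $\E$, giving the required extensions.

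The step I expect to be the main obstacle is exactly this last one: translating ``$\alpha \in O^p(\Aut_{C_\F(T)}(V))$ together with $\E \trianglelefteq \F$'' into the concrete extension statement $\E \subseteq C_\F(W)$. The difficulty is that $O^p$ is defined through the whole lattice of local automorphism groups, so one cannot treat a single $\alpha$ in isolation; the clean bookkeeping is to use Aschbacher's characterization of $C_S(\E)$ as an intersection of local subsystem centralizers and to check that each hyperfocal generator lies in every factor, with Gross's theorem supplying the coprimality input. I would also watch the case $p = 2$, where coprime-action arguments are most delicate and may call for a separate treatment. (In the presence of linking systems one could alternatively read the inclusion off the exact sequence of Theorem A, identifying $C_S(\E)$ with the kernel of the map $\Aut_\L(T) \to \Aut_{typ}^I(\L_0)$ and locating $\mathfrak{hyp}(C_\F(T))$ inside it.)
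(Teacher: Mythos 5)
Your skeleton coincides with the paper's: the inclusion $C_S(\E) \leq C_S(T)$ is definitional, the ``in particular'' clause follows from the hyperfocal subgroup theorem once the left-hand inclusion is established, and that inclusion is to come from Aschbacher's intersection characterization of $C_S(\E)$ combined with Gross's theorem. But the step you yourself flag as ``the main obstacle'' is where all of the content lies, and the mechanism you propose for it does not work. The observation that $\hat\alpha$ fixes $T$ pointwise, hence conjugation by $\hat\alpha$ fixes every morphism of $\E$, is vacuous: \emph{every} $g \in C_S(T)$ satisfies $c_g|_T = \Id_T$ and therefore fixes every morphism of $\E$ under conjugation, yet $C_S(\E)$ is in general a proper subgroup of $C_S(T)$. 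Membership in $C_S(\E)$ is an \emph{extension} property --- every $\psi \in \Hom_\E(P,Q)$ must extend to an $\F$-morphism on $P\langle g\rangle$ fixing $g$ (Definition \ref{csedef}) --- not an invariance property, and no coprime-action argument carried out inside $\Aut_\F(VT)$ can manufacture those extensions. Likewise, ``passing to a realizing transporter or linking system'' does not set up Gross's hypotheses: Theorem \ref{autcentsylow} requires a normal pair of finite groups and an automorphism centralizing a Sylow $p$-subgroup of the normal member, and since $\F$ and $\E$ need not be constrained there is no single group realizing them; the finite group $\Aut_\L(VT)$ plays no such role. (Your parenthetical alternative via Theorem A suffers the same defect: identifying $C_S(\E)$ with a kernel is the easy part, while ``locating $\mathfrak{hyp}(C_\F(T))$ inside it'' is exactly the same extension problem again.)

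The missing idea is a global-to-local transfer through the \emph{constrained} local subsystems. The paper replaces $(\F,\E)$ by the systems $\D(U,X) \trianglerighteq N_\E(U)$, for $U$ fully $\F$-normalized and $T$-centric, which by Lemma \ref{eudu} are realized by normal pairs of finite groups $(G(U,X),H(U,X))$; these, not $\L$, are the groups to which Gross's theorem applies, via Corollary \ref{corauto}. The genuinely hard part, which your proposal leaves open, is showing that the global hypothesis $\beta \in O^p(\Aut_{C_\F(T)}(X))$ is visible in each local model: this is equation (\ref{opcft}), i.e. $O^p(\Aut_{C_\F(T)}(X)) = \Aut_{K(T,X)}(X)$, together with Lemma \ref{strongn}, which uses strongly $(\F,X)$-normalized chains and the isomorphisms $\theta_j$ of Lemma \ref{aschex} to carry $K(T,X)$ down to $K(U,X) = O^p(C_{G(U,X)}(N_T(U)))$, with Corollary \ref{coraschex} arranging such chains up to $\F$-conjugacy. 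Only after this identification do the hyperfocal subgroup theorem applied in the constrained system $\D(U,X)$ and Corollary \ref{corauto} yield $[X,\beta] \leq C_S(H(U,X)) = C_S(H(U))$ (Proposition \ref{hypcont}); then Theorem \ref{csegroup}, together with the $\Aut_\F(TC_S(T))$-invariance of $\mathfrak{hyp}(C_\F(T))$ (Corollary \ref{c: hyps}), gives $\mathfrak{hyp}(C_\F(T)) \leq C_S(\E)$. Without this chain machinery, your argument establishes nothing beyond the trivial inclusion into $C_S(T)$.
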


For Theorem B, we appeal to the characterization of $C_S(\E)$ as an intersection of subsystem centralizers and combine this with an observation of Gross in \cite{FlG}.

In § \ref{s:zstar}, we discuss some topics for future work, focussing mainly on a conjectured generalization of Glauberman's celebrated $Z^*$-theorem.

\section{Background}\label{prelim}
\subsection{Group Theory}
Standard notation and terminology from finite group theory which can be found in any good text on the subject (e.g. \cite{AshFG}) will be adopted. We collect some particularly important notions for future reference. Recall that for a group $G$, $O^p(G)$ (respectively $O^{p'}(G)$) is the smallest normal subgroup $H$ of $G$ with $G/H$ a $p$- (respectively $p'$-) group. Similarly, $O_p(G)$ (respectively $O_{p'}(G)$) is the largest normal $p$- (respectively $p'$-) subgroup of $G$. We say that $G$ has a \textit{normal $p$-complement} if $O^{p}(G)=O_{p'}(G)$. A subgroup $H$ of $G$ is \textit{$G$-centric} if $C_G(H) \leq H$. When $O_{p'}(G)=1$, $G$ is \textit{$p$-constrained} if there exists a normal $G$-centric $p$-subgroup of $G$.

\subsection{Fusion systems}
We assume the reader is familiar with the basic definitions and terminology associated with fusion systems which can be found in either of the two recent texts on this subject \cite{AKO,CR}. In particular, we assume a working knowledge of saturation and Alperin's fusion theorem, together with definitions and results concerning `important' subsystems such as $\F$-normalizers and $\F$-centralizers of subgroups, and various examples of normal subsystems.

\subsection{Linking systems}
For an introduction to the theory of abstract linking systems, including a definition and some of its basic consequences, we refer the reader to \cite[III.4.1]{AKO}. An alternative viewpoint (which avoids the categorical definition) is presented in \cite[§ 2]{Ch}. A proof that these two viewpoints are equivalent can be found in \cite[Appendix A]{Ch}. 
\newline
\newline

\section{Centralizers of Linking Systems}\label{s:centlink}
In this section, we shall adopt the notation and terminology of \cite[§ III.4]{AKO}. Let $\F$ be a saturated fusion system on a finite $p$-group $S$. For the definition of a \textit{linking system} associated to $\F$ see \cite[Definition 4.1]{AKO}. If $((S,\F),(T,\E))$ is a weakly normal pair of fusion systems and $\L_0$ and $\L$ are linking systems associated to $\E$ and $\F$ respectively, then \cite[Definition 4.12]{AKO} explains what it means for $\L_0$ to be \textit{weakly normal} in $\L$. We begin by describing how $\L$ `acts' on $\L_0$  by conjugation. 

Recall the definitions of \textit{isotypical}, $\Aut_{typ}^I(\L)$ and $\Out_{typ}(\L)$ given in \cite[§ III.4.3]{AKO}. We construct a ``conjugation'' map
\begin{equation}\label{cgam}
\begin{CD}
\mbox{Aut}_\L(T) @>\gamma \mapsto c_\gamma>> \mbox{Aut}_{typ}^I(\L_0) 
\end{CD}
\end{equation} as follows: For each $\gamma \in \Aut_\L(T)$, let $c_\gamma=c_\gamma|_{\L_0}$ be the functor from $\L_0$ to itself defined by setting $(P)c_\gamma:=P\gamma:=P\pi(\gamma)$ for $P \in \Ob(\L_0)$ and $(\psi)c_\gamma:=(\gamma|_{P,P\gamma})^{-1} \circ \psi \circ (\gamma|_{Q,Q\gamma}) \in \Hom_{\L_0}(P\gamma,Q\gamma)$ for each $\psi \in \Hom_{\L_0}(P,Q)$.

Note that both of the functors $\pi$ and $\delta$ act on the left (as is conventional), while we let $c_\gamma$ act on the right so that (\ref{cgam}) defines a group homomorphism. 
 Note also that the above construction makes implicit use of the fact that $\L_0$ is weakly normal in $\L$. We make the following observation:

\begin{Lem}
Let $((S,\F,\L),(T,\E,\L_0))$ be a weakly normal pair of $p$-local finite groups. Then $c_\gamma|_{\L_0} \in \Aut_{typ}^I(\L_0)$ for each $\gamma \in \Aut_\L(T)$.
\end{Lem}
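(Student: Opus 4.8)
The plan is to verify directly that $c_\gamma|_{\L_0}$ satisfies the three defining conditions of an element of $\Aut_{typ}^I(\L_0)$: that it is an \emph{isotypical} self-equivalence of the category $\L_0$, that it is \emph{identity-preserving} on inclusions (this is the superscript $I$), and that it is in fact an automorphism (\ie invertible as a functor). The bulk of the work is purely formal and reduces to unwinding the definition of $c_\gamma$ together with the naturality/compatibility axioms relating $\pi$, $\delta$ and the conjugation maps $\gamma|_{P,P\gamma}$ supplied by weak normality.

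First I would check that $c_\gamma|_{\L_0}$ is a well-defined functor from $\L_0$ to itself. On objects this amounts to showing that $P\gamma = P\pi(\gamma)$ again lies in $\Ob(\L_0)$ whenever $P$ does; here I would invoke the weak normality of $\L_0$ in $\L$, which (mirroring the fusion-system condition) guarantees that the object set of $\L_0$ is closed under $\F$-conjugation, and in particular under the automorphism $\pi(\gamma) \in \Aut_\F(T)$. On morphisms, I would confirm that the conjugate $(\gamma|_{P,P\gamma})^{-1} \circ \psi \circ (\gamma|_{Q,Q\gamma})$ indeed lands in $\Hom_{\L_0}(P\gamma, Q\gamma)$ (again using weak normality, which ensures $\Hom$-sets of $\L_0$ are preserved under $\L$-conjugation), and that $c_\gamma|_{\L_0}$ respects composition and identities — a direct cancellation of the restriction maps $\gamma|_{-,-}$ that telescope in the middle. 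Functoriality of $\gamma \mapsto c_\gamma$ as a homomorphism (already noted in the excerpt) shows $c_\gamma$ has two-sided inverse $c_{\gamma^{-1}}$, so it is an automorphism.

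Next I would establish that $c_\gamma|_{\L_0}$ is \emph{isotypical}. By definition this means it commutes appropriately with the distinguished monomorphisms $\delta$, equivalently that for each object $P$ the restriction of $c_\gamma$ to the distinguished subgroup $\delta_P(P) \leq \Aut_{\L_0}(P)$ agrees with conjugation by $\gamma|_{P,P\gamma}$ induced from $\delta_{P\gamma}(P\gamma)$. This is the natural commuting square one gets from axiom (C) of a linking system applied to the morphism $\gamma|_{P,P\gamma}$ relating $\delta_P$ and $\delta_{P\gamma}$; I would simply cite this axiom and read off the identity. The identity-preserving condition ($I$) is immediate from the fact that $c_\gamma$ sends the inclusion morphism $\iota_{P}^{Q}$ to $(\gamma|_{P,P\gamma})^{-1}\circ \iota_{P}^{Q} \circ (\gamma|_{Q,Q\gamma})$, which by compatibility of the restriction maps $\gamma|_{-,-}$ with inclusions (a standard consequence of the linking-system axioms, \ie $\gamma|_{P,P\gamma}$ is the restriction of $\gamma|_{Q,Q\gamma}$ along the inclusions) equals the inclusion $\iota_{P\gamma}^{Q\gamma}$.

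I expect the main obstacle to be bookkeeping rather than conceptual: one must be careful that all the objects $P$, $P\gamma$, $Q$, $Q\gamma$ genuinely lie in $\L_0$ and that the restricted morphisms $\gamma|_{P,P\gamma}$ exist and behave well, all of which is precisely what weak normality of $\L_0$ in $\L$ is designed to supply. The only genuinely delicate point is confirming the isotypicality square, since it requires matching the action of $c_\gamma$ on $\delta_P(P)$ with the ``correct'' conjugation inside $\L_0$; here the left-versus-right convention flagged before the lemma must be tracked carefully so that $\gamma \mapsto c_\gamma$ really is a homomorphism and not an anti-homomorphism. Once the definitions and the convention are laid out side by side, each verification is a one- or two-line diagram chase.
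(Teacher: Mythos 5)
Your proposal is correct and follows essentially the same route as the paper: the inclusion-preserving property via the compatibility of the restrictions $\gamma|_{P,P\gamma}$ with inclusions (this is \cite[Proposition III.4.3(a)]{AKO}, which the paper cites), and isotypicality via Axiom (C) applied to $\gamma|_{P,P\gamma}$. The extra well-definedness and invertibility checks you include are handled implicitly in the paper by the remark, made just before the lemma, that the construction of $c_\gamma$ uses weak normality of $\L_0$ in $\L$.
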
 

\begin{proof}
By \cite[Proposition 4.3 (a)]{AKO} applied to $\L$, $\iota_P^Q \circ \gamma|_{Q,Q\gamma}=\gamma|_{P,P\gamma} \circ \iota_{P\gamma}^Q$ for each $P,Q \in$ Ob$(\L_0)$ so that  $c_\gamma$ clearly sends inclusions to inclusions. To see that $c_\gamma$ is isotypical, note that for each $P \in$ Ob$(\L_0)$ and $g \in P$,  $$\delta_P(g) \circ \gamma|_{P,P\gamma}=\gamma|_{P,P\gamma} \circ \delta_{P\gamma}(g\pi(\gamma))$$ by Axiom (C) for $\L$ (\cite[Definition 4.1]{AKO}). Thus $(\delta_P(g))c_\gamma=\delta_{P\gamma}(g\pi(\gamma))$, as needed. 
\end{proof}

Next, we generalize the set $\Out_{typ}(\L_0)$ to define an object dependent on a weakly normal pair of $p$-local finite groups:

\begin{Def}
Let $((S,\F,\L),(T,\E,\L_0))$ be a weakly normal pair of $p$-local finite groups. Two isotypical equivalences $\alpha,\beta$ are \textit{$\L$-naturally isomorphic} if for each $R \in \Ob(\L_0)$, there exists $\eta_R \in \Hom_\L(\alpha(R),\beta(R))$ such that for all $\varphi \in \Hom_{\L_0}(R,R'),$ the diagram $$
\begin{CD}
\alpha(R) @>\eta_R>> \beta(R)\\
@VV\alpha(\varphi)V @VV\beta(\varphi)V\\
\alpha(R') @>\eta_{R'}>> \beta(R')
\end{CD}$$ commutes. 
Let $\Out_{typ}^\L(\L_0)$ denote the set of equivalence classes of $\Aut_{typ}(\L_0)$ under $\L$-natural isomorphism.
\end{Def}

Notice that $\L$-natural isomorphism is a coarser equivalence relation than that of natural isomorphism. Thus one has a natural map $$\begin{CD}
\Out_{typ}(\L_0) @>>> \Out^\L_{typ}(\L_0).\\
\end{CD}
$$ We will need one more definition before we can state and prove Theorem A.

\begin{Def}\label{csedef}
Let $((S,\F),(T,\E))$ be a weakly normal pair of fusion systems. Then
$$C_S(\E):=\{g \in S  \mid \E \subseteq C_\F(\langle g \rangle)\}.$$
\end{Def}

We now prove Theorem A, which may be regarded as a generalization of \cite[Lemma 1.14(a)]{AOV}.

\begin{Thm}\label{weaklythm}
Let $((S,\F,\L),(T,\E,\L_0))$ be a weakly normal pair of $p$-local finite groups. The sequence 
$$\begin{CD}
1 @>>> C_S(\E) @>\delta_S>> \Aut_\L(T) @>\gamma \mapsto c_\gamma>> \Aut_{typ}^I(\L_0) @>>> \Out_{typ}^\L(\L_0)\\
\end{CD}$$ is exact. In particular, $C_S(\E)$ is a strongly $\F$-closed subgroup of $S$.
\end{Thm}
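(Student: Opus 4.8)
The plan is to verify exactness one position at a time, the guiding idea being to identify the fusion-theoretic group $C_S(\E)$ of Definition~\ref{csedef} with a kernel living inside the linking system $\L$. Injectivity at the left-hand term is immediate: since $\E \subseteq C_\F(\langle g\rangle)$ forces $T \leq C_S(g)$, one has $C_S(\E) \leq C_S(T) \leq N_S(T)$, so $\delta_S$ is the restriction to $C_S(\E)$ of the distinguished monomorphism $\delta_T \colon N_S(T) \to \Aut_\L(T)$, which is injective by the defining axioms of $\L$. The two genuinely substantive points are exactness at $\Aut_\L(T)$ and at $\Aut_{typ}^I(\L_0)$; the closing clause on strong closure I would extract afterwards.

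For exactness at $\Aut_\L(T)$ the plan is to prove $\delta_S(C_S(\E)) = \Ker(\gamma \mapsto c_\gamma)$ by translating the defining condition of $C_S(\E)$ into the relation $c_{\delta_T(g)} = \Id_{\L_0}$. Fix $g \in C_S(T)$; since $g$ centralises every $P \in \Ob(\L_0)$ one has $P\delta_T(g) = P$ and $\delta_T(g)|_{P,P} = \delta_P(g)$, so $c_{\delta_T(g)}$ already fixes objects, and on $\psi \in \Hom_{\L_0}(P,Q)$ it is the identity precisely when $\psi \circ \delta_P(g) = \delta_Q(g) \circ \psi$. If $g \in C_S(\E)$, then $\pi(\psi)$ extends in $\F$ to a map $P\langle g\rangle \to Q\langle g\rangle$ fixing $g$; lifting this extension to $\L$, applying Axiom~(C) at the element $g$, and restricting along the inclusions $P \leq P\langle g\rangle$, $Q \leq Q\langle g\rangle$ yields exactly this commutation, so $c_{\delta_T(g)} = \Id$. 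Conversely, if $c_\gamma = \Id$ then applying $c_\gamma$ to the morphisms $\delta_P(x)$ (for $x \in P$, $P \in \Ob(\L_0)$) and using the computation $(\delta_P(x))c_\gamma = \delta_{P\gamma}(x\pi(\gamma))$ together with injectivity of $\delta$ forces $\pi(\gamma) = \Id_T$; hence $\gamma \in \Ker(\pi_T) = \delta_T(C_S(T))$, so $\gamma = \delta_T(g)$ for a unique $g \in C_S(T)$, and reversing the previous computation shows $g \in C_S(\E)$. This simultaneously shows $C_S(\E) = \{g \in C_S(T) : \delta_T(g) \in \Ker(c)\}$, which is a subgroup of $S$.

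At $\Aut_{typ}^I(\L_0)$ the claim is $\Im(\gamma \mapsto c_\gamma) = \Ker\big(\Aut_{typ}^I(\L_0) \to \Out_{typ}^\L(\L_0)\big)$, that is, the isotypical automorphisms $\L$-naturally isomorphic to the identity are exactly the $c_\gamma$. One inclusion is formal: for any $\gamma$ the family $\eta_R := \gamma|_{R,R\gamma}$ is an $\L$-natural isomorphism from $\Id$ to $c_\gamma$ by the very definition of $c_\gamma$. For the reverse inclusion I would use that $T$, being $\E$-centric, lies in $\Ob(\L_0)$ and is fixed by any inclusion-preserving isotypical $\alpha$; given an $\L$-natural isomorphism $\{\eta_R\}$ from $\alpha$ to $\Id$, I set $\gamma := \eta_T^{-1} \in \Aut_\L(T)$ and check, using the naturality squares for the inclusions $\iota_R^T$ together with uniqueness of restrictions in $\L$, that $\gamma|_{R,R\gamma} = \eta_R^{-1}$ for every $R$; the naturality squares then read precisely as $c_\gamma = \alpha$. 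Finally, surjectivity onto $\Out_{typ}^\L(\L_0)$ (the trailing $\to 1$ of the introduction) follows from the standard fact that every isotypical self-equivalence of a linking system is naturally, hence $\L$-naturally, isomorphic to an inclusion-preserving one, so $\Aut_{typ}^I(\L_0)$ already surjects onto the $\L$-natural classes.

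For the concluding assertion of strong closure I would argue directly from Definition~\ref{csedef}: if $g \in C_S(\E)$ and $\phi \in \Hom_\F(\langle g\rangle, S)$ with $g' = \phi(g)$, then conjugating the inclusion $\E \subseteq C_\F(\langle g\rangle)$ by $\phi$ and invoking the $\F$-invariance of the normal subsystem $\E$ gives $\E \subseteq C_\F(\langle g'\rangle)$, whence $g' \in C_S(\E)$. I expect the main obstacle to be the middle step: matching, in both directions, the ``extends over $\langle g\rangle$ fixing $g$'' condition defining $C_S(\E)$ in $\F$ with the linking-theoretic commutation $\psi \circ \delta_P(g) = \delta_Q(g)\circ\psi$, and in particular producing the required extension in $\L$ from that relation. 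This is where Axiom~(C), the lifting of $\F$-morphisms to $\L$, the behaviour of restrictions along inclusions, and the identification $\Ker(\pi_T) = \delta_T(C_S(T))$ all have to be deployed with care.
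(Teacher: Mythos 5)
Your handling of exactness follows the paper's route almost verbatim: the kernel of $\gamma\mapsto c_\gamma$ is computed exactly as in the paper (Axiom (C) plus injectivity of the distinguished monomorphisms force $\pi(\gamma)=\Id_T$, and Axiom (A2) then identifies $\gamma=\delta_T(a)$ with $a\in C_S(T)$); your identification of the kernel of $\Aut_{typ}^I(\L_0)\to\Out_{typ}^\L(\L_0)$ with the conjugation automorphisms, via restricting $\eta_T$ along inclusions, and your appeal to the fact that every isotypical equivalence is naturally isomorphic to an inclusion-preserving one (\cite[Lemma III.4.9]{AKO}) for surjectivity, are the paper's arguments. The step you flag as the ``main obstacle''---producing, from the commutation $\psi\circ\delta_P(a)=\delta_Q(a)\circ\psi$, an extension $\bar\psi\in\Mor_\L(\langle P,a\rangle,\langle Q,a\rangle)$ of $\psi$ with $\pi(\bar\psi)(a)=a$---is indeed missing from your write-up, but it is supplied by a standard property of linking systems, namely \cite[Proposition 4.3(b)]{AKO}, which is precisely what the paper invokes; so that gap is fillable.

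The genuine defect is your proof of strong closure. You propose to ``conjugate the inclusion $\E\subseteq C_\F(\langle g\rangle)$ by $\phi$'' for $\phi\in\Hom_\F(\langle g\rangle,S)$, using only the $\F$-invariance of $\E$. This does not parse: the containment $\E\subseteq C_\F(\langle g\rangle)$ is the statement that every $\varphi\in\Hom_\E(P,Q)$ extends to an $\F$-morphism $P\langle g\rangle\to Q\langle g\rangle$ fixing $g$, and $\phi$, being defined only on $\langle g\rangle$, cannot be applied to any of these extensions. To transport the condition along $\phi$ you would need $\phi$ itself to extend to a morphism defined on $T\langle g\rangle$ (or at least on each $P\langle g\rangle$), and no such extension need exist; Alperin's fusion theorem only factors $\phi$ through automorphisms of essential subgroups, which need not contain $T$, so your argument goes through only in the special case where $\phi$ is the restriction of some $\alpha\in\Aut_\F(S)$. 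Moreover, an argument for this part of the theorem that uses nothing beyond weak normality of $\E$ in $\F$ should arouse suspicion: Example~\ref{countex} of the paper exhibits a weakly normal pair of fusion systems (with no compatible pair of linking systems) for which the set $C_S(\E)$ is not even a subgroup, so the linking system cannot be eliminated from this portion of the proof. The paper's argument stays inside $\L$: it lifts $\varphi\in\Hom_\F(P,S)$ to $\psi\in\Hom_\L(P,S)$, uses Axiom (C) to obtain $\psi^{-1}\circ\delta_P(a)\circ\psi=\delta_S(a\varphi)$, and concludes $c_{\delta_S(a\varphi)}=c_\psi^{-1}\circ\Id_{\L_0}\circ c_\psi=\Id_{\L_0}$; that is, membership in the kernel you already computed is visibly stable under conjugation in $\L$. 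You should replace your final paragraph by an argument of this form.
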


\begin{proof}
By \cite[Lemma III.4.9]{AKO}, $\Aut_{typ}^I(\L_0)$ is a group and so  $C_S(\E)$ is a group provided the sequence is exact. First we show that the natural homomorphism from $\Aut_{typ}^{I}(\L_0)$ to $\Out_{typ}^\L(\L_0)$ is onto. This amounts to showing that each $\alpha \in \Aut_{typ}(\L_0)$ is $\L$-naturally isomorphic to an isotypical equivalence which sends inclusions to inclusions. But \cite[Lemma III.4.9]{AKO} implies that any such $\alpha$ is $\L_0$-naturally isomorphic to an isotypical equivalence, establishing the claim.

Now suppose that $\alpha \in  \Aut_{typ}^I(\L_0)$ is in the kernel of the natural map from  $\Aut_{typ}^{I}(\L_0)$ to $\Out_{typ}^\L(\L_0)$. Then $\alpha$ is $\L$-naturally isomorphic to the identity functor and so for each $P \in \Ob(\L_0)$, there are $\eta_P \in  \Iso_\L(P,\alpha(P))$, and for each $\psi \in  \Hom_{\L_0}(P,Q)$, $$\eta_P \circ \alpha(\psi)=\psi \circ \eta_Q.$$ Since $\alpha$ is isotypical, $\alpha(\iota_P^T)=\iota_{\alpha(P)}^T$ so that $\eta_P=\eta_T|_{P,\alpha(P)}$, and this shows that $\alpha$ is given by conjugation by $\eta_T \in   \Aut_\L(T)$. Conversely, if $\gamma \in \Aut_\L(T)$ then $c_\gamma$ is $\L$-naturally isomorphic to the identity functor $\Id_{\L_0}$ via the set of maps $\{\gamma|_{P, P\pi(\gamma)}\}_{P \in Ob(\L_0)}.$

Next we show that for each $\gamma \in  \Aut_\L(T)$, $c_\gamma= \Id_{\L_0}$ if and only if $\gamma \in \delta_S(C_S(\E))$. If $c_\gamma= \Id_{\L_0}$ then since $\gamma^{-1} \circ \delta_T(g) \circ \gamma=\delta_T(g)$ for all $g \in S$, we must have $\pi(\gamma)=  \Id_T$ by Axiom (C) for $\L$ (\cite[Definition 4.1]{AKO}) and the fact that $\delta_T$ is injective. Hence by Axiom (A2) for $\L$, there is some $a \in C_S(T)$ such that $\gamma=\delta_T(a)$. Now, for each $P,Q \in  \Ob(\L_0)$ and $\psi \in  \Mor_{\L_0}(P,Q)$, $\psi \circ \delta_Q(a)=\delta_P(a) \circ \psi$ so that by \cite[Proposition 4.3(b)]{AKO}, there is some $\bar{\psi} \in  \Mor_\L(\langle P,a \rangle, \langle Q,a \rangle)$ such that $\bar{\psi}|_{P,Q}=\psi$ (note that $a \in C_S(T) \leq C_S(P)$). By Axiom (C) for $\L$ again and the injectivity of $\delta$, $\pi(\bar{\psi})(a)=a$. Hence each morphism in $\E$ extends to one which fixes $a$ and $a \in C_S(\E)$. Conversely if $\gamma=\delta_S(a)$ for some $a \in C_S(\E)$, then $\pi(\gamma)=c_a$ is the identity on $\Ob(\L_0)$ and each $\psi \in$ Mor$(\L_0)$ extends to some $\bar{\psi}$ with $(a)\pi(\bar{\psi})=a$. By Axiom (C) for $\L$, $\bar{\psi}$ commutes with $\delta_S(a)$ and we have $c_\gamma(\psi)=\psi$. Hence $c_\gamma=$ Id$_{\L_0},$ as required.

It remains to prove that $C_S(\E)$ is strongly $\F$-closed. Let $a \in C_S(\E)$ and choose any subgroup $P$ and morphism $\varphi \in \Hom_\F(P,S)$ with $a \in P$. Let $\psi \in \Hom_\L(P,S)$ be such that $\pi(\psi)=\varphi$ and note that by Axiom (C) for $\L$, we have $\psi^{-1} \circ \delta_P(a) \circ \psi=\delta_S(a\varphi).$ This implies that $c_{\delta_S(a\varphi)}=c_{\psi}^{-1} \circ \mbox{ Id}_{\L_0} \circ c_{\psi}=\mbox{ Id}_{\L_0}$ (since $c_{\delta_P(a)}= \Id_{\L_0}$) and $a\varphi \in C_S(\E)$ as needed.
\end{proof}

We end this section with an example which shows that Theorem A does not hold for weakly normal pairs of fusion systems:

\begin{Ex}\label{countex}
Let $H:=H_1 \times H_2 \times H_3$ with $H_i \cong A_4$ for $1 \leq i \leq 3$ be a direct product of three copies of the alternating group on 4 letters. For $1 \leq i \leq 3$ let $S_i \in$ Syl$_2(H_i)$ so that $S:=S_1 \times S_2 \times S_3 \in$ Syl$_2(H)$ and let $X_i = \langle x_i \rangle$ be a group of order 3 which acts on $S_i$ in such a way that $H_i=S_i \rtimes X_i$ for each $i$. Set $X:=\langle x_1x_2, x_1x_3 \rangle \cong C_3 \times C_3$, $G:=SX$ and define $$\F:=\F_S(G),\mbox{ } \F_1:=\F_{S_1S_2}(\langle S_1,S_2, x_1x_2 \rangle) \mbox{ and } \F_2:=\F_{S_1S_3}(\langle S_1,S_3, x_1x_3 \rangle).$$
Clearly $\F_i$ is normal in $\F$ for $i=1,2$ and hence the fusion system $\E:=\F_1 \cap \F_2$ on $S_1$, whose morphisms consist of those in both $\F_1$ and $\F_2$, is an $\F$-invariant subsystem on $S_1$ (intersections of $\F$-invariant subsystems are $\F$-invariant). Furthermore $\E=\F_{S_1}(H_1)$ so $\E$ is saturated and hence weakly normal. However, $\E$ is not normal in $\F$. To see this, let $\varphi \in$ Aut$_\E(S_1)$ be the map induced by conjugation by $x_1$ so that $\varphi$ extends to maps $\varphi_1=c_{x_1x_2}$ and $\varphi_2=c_{x_1x_3} \in$ Aut$_\F(S)$. Observe that $\varphi_i$ does not act trivially on $C_S(S_1)/Z(S_1) = S/S_1 = S_2 \times S_3$ for $i=1,2$, proving the claim.

Now, for $i=2,3$, $\E \subseteq C_\F(S_i)=\F_{4-i} \times \F_{S_i}(S_i)$, but $\E$ is not contained in $C_\F(S_2  \times S_3)=\F_S(S)$. Hence there is no unique maximal subgroup of $S$ which centralizes $\E$, and hence no weakly normal pair of linking systems associated to $(\F,\E)$ by Theorem A.
\end{Ex}

\section{Centralizers of Fusion Systems}\label{s:centfus}

In order to introduce Aschbacher's local characterization of $C_S(\E)$, we first introduce local subsystems for fusion systems. In this section, $((S,\F),(T,\E))$ is always assumed to be a normal pair of saturated fusion systems.

\begin{Def}
Let $U$ be a fully $\F$-normalized $T$-centric subgroup of $T$, and let $X \leq C_S(T)$. Define $U_X:=UXC_S(UX),$ and let
 $\D(U,X):=N_\F^K(U_X)$, where $$K:=\{\varphi \in \Aut_\F(U_X) \mid \varphi|_{UX} \in \Aut_\F(UX) \}.$$
\end{Def}

The reader will readily check that $\D(U,1)$ is exactly the fusion system $\D(U)$ described in \cite[Definition 8.37]{CR}. The next result, which is a summary of \cite[Lemma 6.3]{A2}, may be regarded as a generalization of \cite[Lemma 8.38]{CR}, at least in the case where $U$ is $T$-centric.

\begin{Lem}\label{eudu}
Let $U$ be a fully $\F$-normalized $T$-centric subgroup of $T$, and let $X \leq C_S(T)$. If $UX$ is fully $N_\F(U)$-normalized then the following hold:

\begin{itemize}
\item[(a)] $((N_S(UX),\D(U,X)),(N_T(U),N_\E(U)))$ is a normal pair of constrained fusion systems realized by a normal pair $(G(U,X),H(U,X))$ of finite groups.
\item[(b)]  $H(U):=H(U,1)$ is a subgroup of $G(U,X)$ and $$C_S(H(U)) = C_S(H(U,X)).$$
\end{itemize}
\end{Lem}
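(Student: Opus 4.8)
The plan is to exploit the theory of constrained fusion systems and their models, together with the correspondence between normal subsystems of a constrained system and normal subgroups of its model. Throughout write $P := N_S(UX)$, the Sylow subgroup carrying $\D(U,X)$. I would first record the structural facts behind (a). Since $X \leq C_S(T) \leq C_S(U)$ one has $UX \leq U\,C_S(U)$ and $C_S(U_X) \leq C_S(UX) \leq U_X$, so $U_X = UX\,C_S(UX)$ is $S$-centric. As any $n \in N_S(UX)$ also normalizes $C_S(UX)$, one gets $N_S(UX) \leq N_S(U_X)$ and $\Aut_P(U_X) \leq K$, so that $\D(U,X) = N_\F^K(U_X)$ has Sylow subgroup exactly $P$ and has $U_X$ as a normal centric subgroup. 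The hypothesis that $UX$ is fully $N_\F(U)$-normalized is precisely what guarantees, via the standard saturation criterion for $K$-normalizer subsystems, that $\D(U,X)$ is saturated, hence a constrained fusion system; the model theorem then produces $G(U,X)$ with $P \in \Syl_p(G(U,X))$, $O_{p'}(G(U,X)) = 1$ and $\F_P(G(U,X)) = \D(U,X)$. Since $U$ is $T$-centric, $N_\E(U)$ is likewise constrained on $N_T(U)$ with normal centric subgroup $U$.

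The crux of (a) is to show $N_\E(U) \trianglelefteq \D(U,X)$. Here I would verify the two normality conditions by pushing down the hypothesis $\E \trianglelefteq \F$: the $\F$-invariance of $\E$ restricts to $\D(U,X)$-invariance of $N_\E(U)$ once one checks that $N_T(U) \leq P$ is strongly closed in $\D(U,X)$ and that every $\D(U,X)$-conjugation map normalizes the morphism set of $N_\E(U)$, while the extension (Frobenius) condition is inherited from the corresponding condition for $\E$ in $\F$ by restricting morphisms to the relevant normalizers. Granting normality, the normal-subsystem–to–normal-subgroup correspondence for the constrained system $\D(U,X) = \F_P(G(U,X))$ yields a normal subgroup $H(U,X) \trianglelefteq G(U,X)$ with $N_T(U) \in \Syl_p(H(U,X))$ and $\F_{N_T(U)}(H(U,X)) = N_\E(U)$; since $O_{p'}(H(U,X)) \leq O_{p'}(G(U,X)) = 1$ this realizes the normal pair $\big((P,\D(U,X)),(N_T(U),N_\E(U))\big)$ by the pair of groups $(G(U,X), H(U,X))$.

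For (b), specializing to $X = 1$ gives the model $G(U) = G(U,1)$ and its normal subgroup $H(U) = H(U,1)$ realizing $N_\E(U)$. Using $N_T(U) \leq P$ and $U_X \leq U_1 := U\,C_S(U)$, and the fact that both $H(U)$ and $H(U,X)$ are finite groups realizing the same constrained system $N_\E(U)$ with Sylow subgroup $N_T(U)$ (hence identified by uniqueness in the model theorem), I would identify $H(U)$ with the normal subgroup $H(U,X)$ of $G(U,X)$, giving $H(U) \leq G(U,X)$. The centralizer equality then reduces to an intrinsic characterization: an element $s \in S$ centralizes $H(U,X)$ exactly when $s \in C_S(N_T(U))$ and conjugation by $s$ fixes every morphism of $N_\E(U)$, equivalently $N_\E(U) \subseteq C_\F(\langle s \rangle)$; this condition refers only to $\E$, $U$ and $s$, and is manifestly independent of $X$. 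Hence $C_S(H(U)) = C_S(H(U,X))$.

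I expect the main obstacle to be the normality verification $N_\E(U) \trianglelefteq \D(U,X)$ in part (a): the subtlety is that $\D(U,X)$ is cut out of $\F$ by the $K$-condition on $U_X$ rather than being a plain normalizer, so one must check carefully that restriction of morphisms transports the $\F$-normality data of $\E$ into $\D(U,X)$ without losing the Frobenius extension property, exploiting throughout that $X \leq C_S(T)$ commutes with $T$. The centralizer equality in (b) is comparatively formal once the intrinsic description of ``$s$ centralizes the model'' is in place. Since the statement merely summarizes \cite[Lemma 6.3]{A2}, one could alternatively invoke that result directly; the outline above indicates how to recover it from the model correspondence.
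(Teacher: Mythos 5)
Your proposal takes a genuinely different route from the paper: the paper proves this lemma purely by citation ((a) is \cite[6.3.3]{A2}, (b) is \cite[6.3.5]{A2}), whereas you attempt to reconstruct Aschbacher's argument from the model theorem and the correspondence between normal subsystems of constrained systems and normal subgroups of models. That is a legitimate ambition, but your argument for (b) has a genuine error. You claim that $s$ centralizes $H(U,X)$ exactly when $s \in C_S(N_T(U))$ and conjugation by $s$ fixes every morphism of $N_\E(U)$, ``equivalently'' $N_\E(U) \subseteq C_\F(\langle s \rangle)$. Neither the characterization nor the claimed equivalence is true. Take $p=2$, $G = H = S_4$, $S = T = D_8$, so $\F = \E = \F_{D_8}(S_4)$ is a constrained normal pair; take $U = O_2(S_4) \cong C_2 \times C_2$ (fully $\F$-normalized and $T$-centric) and $X = 1$, so that $\D(U,1) = \F$ and $G(U,1) = H(U,1) = S_4$. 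Let $s = (12)(34) \in Z(D_8)$. Then $s$ centralizes $N_T(U) = D_8$, and conjugation by $s$ fixes \emph{every} morphism of $\F_{D_8}(S_4)$: for a morphism $c_g \colon P \to Q$ this amounts to $[g,s] \in C_{S_4}(P)$, which one checks case by case (if $P \leq U$ then $[g,s] \in U \leq C_{S_4}(P)$; otherwise $P$ contains a $4$-cycle or lies in $\langle (13),(24) \rangle$, forcing $g \in D_8$ or $g \in N_{S_4}(\langle (13),(24)\rangle)=D_8$ up to the listed cosets, all of which work). Yet $C_S(H(U,1)) = C_{D_8}(S_4) = 1$, so $s$ does \emph{not} centralize the model. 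Moreover $N_\E(U) \subseteq C_\F(\langle s \rangle)$ fails for this $s$, since $\Aut_\E(U) \cong S_3$ moves $s$ and an automorphism of $U$ not fixing $s$ cannot extend to one fixing $s$; so your two conditions are inequivalent (containment in $C_\F(\langle s\rangle)$ demands \emph{extensions} of morphisms fixing $s$, strictly stronger than fixing morphisms). A correct fusion-theoretic characterization of $C_S(H(U,X))$ of the kind you want is essentially the constrained case of the $Z^*$-type Conjecture 5.2 of this very paper; it is anything but ``comparatively formal.'' Relatedly, identifying $H(U)$ with $H(U,X)$ by model uniqueness gives an isomorphism that is the identity on $N_T(U)$, but this does not transport centralizers: two automorphisms of a model agreeing on its Sylow subgroup may differ by a nontrivial $p$-element of $C_{\Aut(H)}(\Syl_p)$ (the same example: $c_s$ versus the identity on $S_4$), so the conjugation actions of $s$ inside $G(U)$ and inside $G(U,X)$ need not agree; controlling that discrepancy is exactly the nontrivial content of (b).

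For (a) your outline is structurally sound and indeed parallels Aschbacher's proof (centricity of $U_X$, saturation and constraint of $\D(U,X)$, the model theorem, and the normal-subsystem/normal-subgroup correspondence), but the two steps carrying all the weight are announced rather than proved: the translation of ``$UX$ is fully $N_\F(U)$-normalized'' into full $K$-normalization of $U_X$ (needed for saturation of $N_\F^K(U_X)$), and the verification $N_\E(U) \trianglelefteq \D(U,X)$, which you yourself flag as the crux. As it stands, the proposal is a plan for (a) and an incorrect argument for (b); repairing (b) requires the group-theoretic comparison of $G(U)$ with $G(U,X)$ carried out in \cite[6.3.5]{A2}, not a fusion-theoretic description of the centralizer.
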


\begin{proof}
(a) is \cite[Lemma 6.3.3]{A2}, while (b) is \cite[Lemma 6.3.5]{A2}.
\end{proof}

It turns out that one can formulate $C_S(\E)$ as an intersection of centralizers of the $H(U)$, as the next result describes:

\begin{Thm}\label{csegroup}
Let $\U$ be the set of all fully $\F$-normalized, $T$-centric subgroups of $S$. Then $$C_S(\E)= \bigcap_{\varphi \in \Aut_\F(TC_S(T))} \left(  \bigcap_{U \in \U} C_S(H(U)) \right) \varphi.$$ Furthermore, $C_S(\E)$ is a strongly $\F$-closed subgroup of $S$.
\end{Thm}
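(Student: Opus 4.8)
The plan is to prove the formula for $C_S(\E)$ by reducing everything to the local characterization supplied by Aschbacher, and then to deduce strong closure as a formal consequence. The starting point is Aschbacher's definition: $C_S(\E)$ is the largest subgroup $X \leq C_S(T)$ with $\E \subseteq C_\F(X)$, equivalently (by Definition \ref{csedef}) the set of $g \in S$ with $\E \subseteq C_\F(\langle g \rangle)$. The right-hand side of the claimed identity involves the groups $H(U)$ attached to the local subsystems $\D(U,X)$ of Lemma \ref{eudu}, so the essential content is that centralizing $\E$ as a fusion system is detected, subgroup by subgroup, through centralizing the concrete finite groups $H(U)$ realizing the constrained local pieces $(N_S(U), N_\E(U))$.

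First I would show the inclusion $C_S(\E) \subseteq \bigcap_{U \in \U} C_S(H(U))$, which is the more conceptual direction. Fix $g \in C_S(\E)$ and a fully $\F$-normalized $T$-centric $U$. Since $g$ centralizes $\E$, in particular it centralizes the local subsystem $N_\E(U)$, which by Lemma \ref{eudu}(a) is realized by $H(U)$; the point is that the condition $\E \subseteq C_\F(\langle g\rangle)$ localizes, forcing $g$ to act trivially on every morphism of $N_\E(U)$ and hence, via the group realization, to centralize $H(U)$. Here I would lean on Lemma \ref{eudu}(b), $C_S(H(U)) = C_S(H(U,X))$, to pass between the $X = 1$ group and the enlarged group $H(U,X)$ that sees $g$. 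For the reverse inclusion I would take $g$ lying in the full intersection (including the conjugation by $\Aut_\F(TC_S(T))$, which is needed precisely so that the intersection is manifestly invariant under the relevant automorphisms) and verify Aschbacher's defining condition $\E \subseteq C_\F(\langle g \rangle)$; by Alperin's fusion theorem it suffices to check this on morphisms in the local subsystems $N_\E(U)$ for $U \in \U$, and centralizing each $H(U)$ delivers exactly that. This is where I expect the main obstacle to lie: translating the abstract containment $\E \subseteq C_\F(\langle g\rangle)$ into a statement checkable on the finitely many realized local groups, and correctly accounting for the outer conjugation by $\Aut_\F(TC_S(T))$ so that both sides of the identity are honestly the same set rather than merely conjugate.

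Once the set-theoretic identity is established, strong closure follows formally and is the easy part. The right-hand side is, by construction, a union over an $\Aut_\F(TC_S(T))$-orbit of intersections of the subgroups $C_S(H(U))$; the subgroups $H(U)$ run over a set $\U$ of fully $\F$-normalized $T$-centric subgroups that is itself permuted by $\F$-conjugacy, and each $C_S(H(U))$ is strongly closed in the constrained local system by Lemma \ref{eudu}. I would argue that any $\F$-morphism $\psi \in \Hom_\F(P,S)$ with $P \ni g$ sends $g$ back into the intersection, because $\psi$ merely permutes the $U$'s and the corresponding local groups $H(U)$ up to $\F$-conjugacy, and the outer intersection over $\Aut_\F(TC_S(T))$ absorbs precisely this ambiguity. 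Alternatively, and more cleanly, I would simply invoke Theorem \ref{weaklythm}: having identified $C_S(\E)$ with the kernel of $\delta_S$ in the exact sequence there, strong $\F$-closure is already proved, so the present theorem need only re-derive the strong closure through the local description, giving an independent confirmation. Either route closes the argument, with the genuine work concentrated entirely in the two inclusions of the first identity.
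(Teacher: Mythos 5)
You should be aware that the paper does not actually prove this theorem: its entire proof is the citation ``This is shown in \cite[Chapter 6]{A2}.'' The statement is imported wholesale from Aschbacher's memoir, where it is the culmination of a long chapter of technical work. So a self-contained argument would necessarily differ from the paper --- but what you have written is an outline of where such a proof would have to go, not a proof, and the points you gesture at are precisely where Aschbacher's effort is concentrated. (You say so yourself: ``this is where I expect the main obstacle to lie.'')

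Concretely, three gaps. First, in the forward inclusion, the step ``forcing $g$ to act trivially on every morphism of $N_\E(U)$ and hence, via the group realization, to centralize $H(U)$'' is not free: $H(U)$ is an abstract model of the constrained system, and passing from ``every morphism of $N_\E(U)$ extends to an $\F$-morphism fixing $g$'' to ``$g$ commutes with every element of $H(U)$ inside $G(U,X)$'' needs the uniqueness and extension theory of models for constrained fusion systems; you invoke Lemma \ref{eudu}(b) but never carry out this translation. Second, in the reverse inclusion, ``by Alperin's fusion theorem it suffices to check on $N_\E(U)$ for $U \in \U$'' hides a mismatch: Alperin's theorem for $\E$ generates $\E$ from $\E$-essential, fully $\E$-normalized subgroups, whereas $\U$ consists of fully $\F$-normalized $T$-centric subgroups; relating the two, and producing for each generator of $\E$ an extension to an $\F$-morphism fixing $g$, is genuine work (this is what the strongly normalized chain machinery of \S 4 of the paper exists to do), and you also never verify how $g$ interacts with the outer intersection over $\Aut_\F(TC_S(T))$ beyond saying it ``absorbs the ambiguity.'' Third, your ``cleaner'' route to strong closure via Theorem \ref{weaklythm} is not available under the present hypotheses: that theorem requires a weakly normal pair of $p$-local finite groups, i.e.\ compatible linking systems associated to $\E \trianglelefteq \F$, which an abstract normal pair of fusion systems does not supply without invoking Chermak's existence theorem together with a nontrivial compatibility statement --- neither of which the paper uses, and Example \ref{countex} shows exactly how delicate this point is. Two smaller errors: Lemma \ref{eudu} nowhere asserts that $C_S(H(U))$ is strongly closed in anything, and the right-hand side of the formula is an intersection, not a ``union over an $\Aut_\F(TC_S(T))$-orbit.'' In short, the skeleton is reasonable, but every load-bearing step is missing; a genuine proof would have to reconstruct Aschbacher's Chapter 6 arguments rather than appeal to them implicitly.
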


\begin{proof}
This is shown in \cite[Chapter 6]{A2}.
\end{proof}

\subsection{Existence of $C_\F(\E)$}
A definition of $C_\F(\E)$, due to Aschbacher, has already appeared in Chapter 6 of \cite{A2}, and the results in this section rely on some of the machinery developed there. Our approach does not however use the theory of normal maps (see \cite[§ 7]{A1}). Instead we rely on a group theoretic result, which is a corollary to the following theorem of Gross in \cite{FlG}:

\begin{Thm}\label{autcentsylow}
Let $G$ be a $p$-constrained finite group with Sylow $p$-subgroup $S$. Assume that $O_{p'}(G)=1$ and write $$C:=C_{\Aut(G)}(S)=\{\varphi \in \Aut(G) \mid \varphi|_S=\Id_S\}.$$ Then $C$ has a normal $p$-complement.
\end{Thm}

\begin{proof}
This follows from (1) and (2) in \cite[§ 5]{FlG}: if $G$ is a $p$-constrained, minimal counterexample to the lemma, (1) and (2) imply that $O_p(G)=1$, where the minimality of $G$ is applied with the respect to the groups $Z/Z(G)$ and $C_G(M)/M$ with $M=Z(O_p(G))$. This supplies the required contradiction. (Note that the assumption $p > 2$ is not used anywhere in either of these two steps.)
\end{proof}

\begin{Cor}\label{corauto}
Let $T \leq S$ be finite $p$-groups and $(G,H)$ be a normal pair of $p$-constrained finite groups with Sylow $p$-subgroup $(S,T)$. If $O_{p'}(G)=1$ then $$O^p(C_G(T)) \cap S \leq C_G(H).$$
\end{Cor}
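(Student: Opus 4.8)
The plan is to translate the containment into a statement about automorphisms of $H$ that fix $T$ pointwise, and then apply Gross's theorem (Theorem \ref{autcentsylow}) not to $G$ but to $H$ itself. Since $H \trianglelefteq G$ is $p$-constrained, the only hypothesis still needed to run Gross's result with $H$ in place of $G$ is $O_{p'}(H) = 1$, so I would establish this first: $O_{p'}(H)$ is characteristic in $H$ and hence normal in $G$, and being a $p'$-group it is contained in $O_{p'}(G) = 1$. As $T \in \Syl_p(H)$, Theorem \ref{autcentsylow} then applies to $H$ and shows that $C_{\Aut(H)}(T) = \{\varphi \in \Aut(H) \mid \varphi|_T = \Id_T\}$ has a normal $p$-complement $N := O_{p'}(C_{\Aut(H)}(T))$, with $C_{\Aut(H)}(T)/N$ a $p$-group.

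Next I would build the conjugation homomorphism. Because $H$ is normal in $G$, conjugation gives a map $\Phi \colon C_G(T) \to \Aut(H)$, $g \mapsto c_g|_H$; and since every $g \in C_G(T)$ centralizes $T$, each $c_g|_H$ fixes $T$ pointwise, so $\Phi$ in fact lands in $C_{\Aut(H)}(T)$. Composing $\Phi$ with the quotient $C_{\Aut(H)}(T) \to C_{\Aut(H)}(T)/N$ produces a homomorphism from $C_G(T)$ to a $p$-group. By the defining minimality property of $O^p$, this homomorphism must kill $O^p(C_G(T))$; that is, $\Phi(O^p(C_G(T))) \leq N$.

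Finally, I would take $a \in O^p(C_G(T)) \cap S$. On one hand $\Phi(a) \in N$, which is a $p'$-group. On the other hand $a$ is a $p$-element, since it lies in $S$, so $\Phi(a) = c_a|_H$ has $p$-power order. A $p$-element of a $p'$-group is trivial, hence $\Phi(a) = \Id_H$, meaning that $a$ centralizes $H$ and $a \in C_G(H)$. This yields the desired containment $O^p(C_G(T)) \cap S \leq C_G(H)$.

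I expect the only genuinely delicate point to be the reduction that legitimizes applying Gross's theorem to $H$ rather than to $G$: one must verify $O_{p'}(H) = 1$ and check that the image of the conjugation map is exactly the group $C_{\Aut(H)}(T)$ appearing in Theorem \ref{autcentsylow}. Everything after that is the standard \emph{$O^p$-maps-into-a-$p$-group} argument combined with the triviality of a $p$-element inside a $p'$-group, and is routine. It is worth noting that the argument uses only that $H$ is $p$-constrained, the hypothesis on $G$ entering solely through $O_{p'}(G) = 1$.
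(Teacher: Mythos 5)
Your proof is correct and follows essentially the same route as the paper: both apply Gross's theorem to $H$ with Sylow subgroup $T$, push $C_G(T)$ into $\Aut(H)$ via conjugation, and use the normal $p$-complement to force the $p$-elements of $O^p(C_G(T))$ to act trivially on $H$. Your write-up is in fact slightly more careful than the paper's, since you explicitly verify $O_{p'}(H)=1$ before invoking Theorem \ref{autcentsylow} for $H$, a point the paper leaves implicit.
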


\begin{proof}
Let $C_{\Aut_G(H)}(T)$ be the image of $C_G(T)$ under the natural map
$$\begin{CD}
\Phi: C_G(T) @>>> \Aut_G(H). \end{CD}$$ Theorem \ref{autcentsylow} implies that $C_{\Aut_G(H)}(T)$ has a normal $p$-complement. In particular, $O^p(C_{\Aut_G(H)}(T))$ is a $p'$-group. Since $$(O^p(C_G(T))\cap S)C_G(H)/C_G(H) \leq O^p(C_G(T))C_G(H)/C_G(H) \leq O^p(C_G(T)/C_G(H)),$$ and this latter group is a $p'$-group, we must have $O^p(C_G(T))\cap S \leq C_G(H)$, as needed.
\end{proof}

Recall the definition of the hyperfocal subgroup of a saturated fusion system:

\begin{Def}\label{defhyp} For any saturated fusion system $\F$ over a finite $p$-group $S$, the hyperfocal subgroup $\mathfrak{hyp}(\F)$ of $\F$ is given by:
$$\mathfrak{hyp}(\F):=\langle g^{-1} \cdot g\alpha \mid g \in P \leq S, \alpha \in O^p(\Aut_\F(P))\rangle$$
\end{Def}

As an immediate consequence of the hyperfocal subgroup theorem (\cite[Theorem 4.3]{BCGLO2}) and \cite[Theorem 1]{A1} we obtain: $$\mathfrak{hyp}(C_\F(T)) \leq C_S(\E)$$ whenever $((S,\F),(T,\E))$ is a pair of \textit{constrained} fusion systems. In particular, there is a saturated fusion system $C_\F(\E)$ on $C_S(\E)$ contained in $C_\F(T)$ by \cite[Theorem I.7.4]{AKO} in this case. Our goal is to prove this in the case where $\E$ and $\F$ are not constrained. We will achieve this by combining Corollary \ref{corauto} with Theorem \ref{csegroup}. First, we need a definition:

\begin{Def}
For each $X \leq C_S(T)$, a chain of subgroups $$\C(U):=U=U_0 < U_1 < \cdots < U_n=T$$ is \textit{strongly $(\F,X)$-normalized} if $U_{i+1}=N_T(U_i)$, $U_i$ is fully $\F$-normalized and $U_iX$ is fully $N_\F(U_i)$-normalized for each $0 \leq i < n$.
\end{Def}

Observe that strongly $(\F,1)$-normalized is exactly what Aschbacher calls strongly $\F$-normalized in \cite[Chapter 1]{A2}. We thus seek a generalization of \cite[1.1.1]{A2}. In other words, under suitable conditions, we would like to assert that each $U \leq T$ is $\F$-conjugate to a subgroup $W$ which affords a strongly $(\F,X)$-normalized chain $\C(W)$. For this, we need another lemma of Aschbacher which says (among other things) that local subsystems behave well under taking normalizers. Write $\U$ for the set of all fully $\F$-normalized $T$-centric subgroups of $T$.

\begin{Lem}\label{aschex}
Let $X \leq C_S(T)$ and $U \in \U$ be such that $UX$ is fully $N_\F(U)$-normalized, and define $Q:=N_T(U)$. There exists $\alpha \in \Hom_\F(N_S(Q),S)$ with the following properties:
\begin{itemize}
\item[(a)] $U\alpha,Q\alpha \in \U$, $(XU)\alpha$ is fully $N_\F(U\alpha)$-normalized and  $(XQ)\alpha$ is fully $N_\F(Q\alpha)$-normalized;
\item[(b)] $\alpha$ extends to a map $\check{\alpha}: G(U,X) \rightarrow G(U\alpha,X\alpha)$ with the property that $$N_{G(Q\alpha,X\alpha)}(U\alpha)=N_{G(U,X)}(Q)\check{\alpha} \mbox{ and } N_{H(Q\alpha,X\alpha)}(U\alpha)=N_{H(U,X)}(Q)\check{\alpha};$$
\end{itemize}
\end{Lem}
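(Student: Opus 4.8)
The plan is to treat the two parts quite differently: part (a) is a purely fusion-theoretic normalization argument built from the saturation axioms, while part (b) is a group-theoretic compatibility statement that I would reduce to the uniqueness of models of constrained fusion systems.

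For part (a), I would construct $\alpha$ by repeatedly invoking the extension axiom (axiom (II) of saturation), arranging the four conditions in a carefully chosen order, normalizing the \emph{larger} subgroup first. Note that $T$-centricity causes no trouble: since $U \le Q = N_T(U)$ and $U$ is $T$-centric, one has $C_T(Q) \le C_T(U) \le U \le Q$, so $Q$ is $T$-centric too, and the images $U\alpha, Q\alpha$ will be $T$-centric and fully $\F$-normalized by construction. The genuine content is to secure full normalization together with the two $X$-twisted conditions simultaneously. I would first choose an $\F$-conjugate $R$ of $Q$ that is fully $\F$-normalized and for which $XR$ is fully $N_\F(R)$-normalized — such a choice exists because $X \le C_S(T)$ is carried along centrally, so that one may take a fully $\F$-normalized conjugate of $Q$ and then adjust $X$ inside the (saturated) normalizer system $N_\F(R)$. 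Saturation then yields $\alpha \in \Hom_\F(N_S(Q),S)$ with $Q\alpha = R$; any remaining slack in the conditions for $U\alpha$ and $(XU)\alpha$ is removed by postcomposing with a morphism supported inside $N_\F(R)$, which fixes $R = Q\alpha$ and so does not disturb the conditions already secured at the top of the chain. The delicacy here, as in the closely related \cite[1.1.1]{A2}, is entirely in the ordering, so that correcting the lower subgroup does not undo the upper one.

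For part (b), the point of the hypotheses just arranged is that Lemma \ref{eudu} now applies simultaneously to $U\alpha$ and to $Q\alpha$ (with parameter $X\alpha$), so all the local groups $G(U,X)$, $G(U\alpha,X\alpha)$, $G(Q\alpha,X\alpha)$ and their normal subgroups $H(\cdot)$ exist and realize the corresponding constrained subsystems. The isomorphism $\alpha$ restricts to an isomorphism of fusion systems $\D(U,X) \xrightarrow{\sim} \D(U\alpha,X\alpha)$ carrying the normal-pair data to its image. Since a constrained fusion system has a model that is unique up to an isomorphism restricting to a prescribed map on the Sylow subgroup (\cite{AKO}), $\alpha$ lifts to a group isomorphism $\check\alpha : G(U,X) \to G(U\alpha,X\alpha)$ extending $\alpha|_{N_S(UX)}$ and respecting the layer $H(\cdot)$.

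The main obstacle — and the heart of the lemma — is the pair of normalizer identities, which I would obtain from an \emph{interchange of normalizers}. At the level of fusion systems one has $N_{N_\F(U\alpha)}(Q\alpha) = N_{N_\F(Q\alpha)}(U\alpha)$, so the subgroup $N_{G(U\alpha,X\alpha)}(Q\alpha)$ and the subgroup $N_{G(Q\alpha,X\alpha)}(U\alpha)$ are two models of one and the same constrained subsystem; by uniqueness of models they coincide, and likewise for the $H$-layer. Granting this, the identity is immediate, since $\check\alpha$ takes $Q$ to $Q\alpha$ and hence carries $N_{G(U,X)}(Q)$ isomorphically onto $N_{G(U\alpha,X\alpha)}(Q\alpha) = N_{G(Q\alpha,X\alpha)}(U\alpha)$. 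The work I expect to be hardest is checking that these two normalizers really are models of the same subsystem \emph{compatibly with the $X$-twist} — that is, that the factors $C_S(UX)$ entering the definition of $\D(U,X)$ behave coherently when one passes between the $U$-local and $Q$-local pictures — and then invoking model uniqueness in a form precise enough to identify the two subgroups on the nose rather than merely up to isomorphism. This is exactly the kind of bookkeeping that the equality $C_S(H(U)) = C_S(H(U,X))$ of Lemma \ref{eudu}(b) is designed to underwrite.
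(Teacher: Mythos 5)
The paper itself does not prove this lemma: its proof is a one-line citation to parts (6)--(8) of \cite[Lemma 6.6.3]{A2}, plus the observation that Aschbacher's hypothesis $X \subseteq C_S(\E)$ is never used in those parts. So you are attempting to reconstruct a long argument of Aschbacher's from scratch, and the reconstruction has genuine gaps at precisely the steps you defer. In part (a), the step that fails is the proposed repair by postcomposition. After arranging $Q\alpha = R$ fully $\F$-normalized with $(X\alpha)R$ fully $N_\F(R)$-normalized, you claim the two conditions at the level of $U$ can be secured by composing with morphisms ``supported inside $N_\F(R)$''. But such a morphism moves $U\alpha$ only through its $N_\F(R)$-conjugacy class, while the condition you must achieve --- $U\alpha$ fully normalized \emph{in $\F$} --- is a condition in the ambient system; in general no $N_\F(R)$-conjugate of a given subgroup is fully $\F$-normalized (full $\F$-normalization and full normalization relative to a local subsystem are genuinely different notions, which is exactly why the lemma's hypotheses and conclusion track both). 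Moreover, postcomposition need not preserve what was already secured: a morphism $\beta$ of $N_\F(R)$ fixes $R$ but moves $(X\alpha)R$ to $(X\alpha\beta)R$, which need not remain fully $N_\F(R)$-normalized. There are also unexamined domain issues: your ``adjusting'' morphism is defined on a subgroup of the form $N_{N_S(R)}(X'R)$ (where $X'$ is the image of $X$ under the first move), which need not contain the image of $N_S(Q)$, so the composite need not be defined on all of $N_S(Q)$ as the statement requires.

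In part (b) the gap is deeper. The groups $G(U\alpha,X\alpha)$ and $G(Q\alpha,X\alpha)$ are models of the \emph{twisted} systems $\D(U\alpha,X\alpha)=N_\F^K((U\alpha) (X\alpha) C_S(U\alpha X\alpha))$ and $\D(Q\alpha,X\alpha)$, not of the plain normalizers $N_\F(U\alpha)$ and $N_\F(Q\alpha)$; so the interchange identity you start from (which is essentially formal for plain normalizers) says nothing directly about $N_{G(U\alpha,X\alpha)}(Q\alpha)$ and $N_{G(Q\alpha,X\alpha)}(U\alpha)$. What is actually needed is an equality of the corresponding normalizer subsystems of the two \emph{different} twisted systems, built on different base subgroups $U\alpha X\alpha C_S(U\alpha X\alpha) \neq Q\alpha X\alpha C_S(Q\alpha X\alpha)$ and with different $K$-conditions, together with the facts that each group-theoretic normalizer is a model of the corresponding subsystem and that model uniqueness can be rigidified to an identification compatible with $\check{\alpha}$ and with the layer $H(\cdot)$ --- model uniqueness by itself gives an isomorphism determined only up to automorphisms inducing the identity on the Sylow subgroup, never an equality of subgroups of two distinct groups. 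You candidly flag this as ``the work I expect to be hardest''; in fact it \emph{is} the lemma --- it is the content of Aschbacher's parts (6)--(8) --- and Lemma \ref{eudu}(b), which concerns centralizers in $S$, does not supply it. As it stands, the proposal is a reasonable road map whose essential steps either fail as stated (part (a)) or are deferred (part (b)); it is not a proof.
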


\begin{proof}
This follows from parts $(6)$, $(7)$ and $(8)$ of \cite[Lemma 6.6.3]{A2}, where we observe that the condition $X \subseteq C_S(\E)$ is never used.
\end{proof}

Following \cite[Notation 6.8]{A2}, for each $U \in \U$ and $X \leq C_S(T)$ for which $UX$ is fully $N_\F(U)$-normalized, define: $$C(U,X):=C_{G(U,X)}(N_T(U)) \mbox{ and } K(U,X):=O^p(C(U,X)).$$
We next present a slight simplification of \cite[6.10]{A2}, which shows that we can pass $p$-local information between elements of $\U$. 

\begin{Lem}\label{strongn}
Let $X \leq C_S(T)$ and $U \in \U$ be such that $$\C(U):=U=U_0 < U_1 < \cdots < U_n=T$$ is a strongly $(\F,X)$-normalized chain. For each $0 \leq j < n$, there exists an isomorphism 

$$\begin{CD}
\theta_j: N_{G(U_j,X)}(U_{j+1}) @>>> N_{G(U_{j+1},X)}(U_j), \\
\end{CD}$$
with \begin{equation}\label{kuj}
C(U_j,X)\theta_j \cdots \theta_n=C(T,X) \mbox{ and } K(U_j,X)\theta_j \cdots \theta_n = K(T,X).
\end{equation}

\end{Lem}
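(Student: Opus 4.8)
The plan is to reduce the assertion to a single-step claim and then telescope. Since $U_n=T$, it suffices to construct, for each $0\le j<n$, an isomorphism $\theta_j\colon N_{G(U_j,X)}(U_{j+1})\to N_{G(U_{j+1},X)}(U_j)$ with the \emph{one-step} properties
$$C(U_j,X)\theta_j=C(U_{j+1},X)\quad\text{and}\quad K(U_j,X)\theta_j=K(U_{j+1},X).$$
Granting this, the full statement follows by composing $\theta_j,\theta_{j+1},\dots,\theta_{n-1}$: one checks that each image lands in the domain of the next map — indeed $C(U_{j+1},X)=C_{G(U_{j+1},X)}(U_{j+2})$ normalizes $U_{j+2}$, so it lies in $N_{G(U_{j+1},X)}(U_{j+2})$, the domain of $\theta_{j+1}$ — and iterating yields $C(U_j,X)\theta_j\cdots\theta_{n-1}=C(U_n,X)=C(T,X)$, and likewise for $K$.

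To build $\theta_j$, I would apply Lemma \ref{aschex} with $U:=U_j$, so that $Q=N_T(U_j)=U_{j+1}$. The strongly $(\F,X)$-normalized hypothesis supplies precisely the input the lemma requires: $U_j\in\U$ is fully $\F$-normalized and $U_jX$ is fully $N_\F(U_j)$-normalized, while $T$-centricity of $U_{j+1}$ is inherited from that of $U_j$ (if $C_T(U_j)\le U_j$ then $C_T(U_{j+1})\le C_T(U_j)\le U_j\le U_{j+1}$). The lemma then furnishes $\alpha_j\in\Hom_\F(N_S(U_{j+1}),S)$ and an extension $\check\alpha_j\colon G(U_j,X)\to G(U_j\alpha_j,X\alpha_j)$ satisfying the two normalizer identities of part (b). Because $U_{j+1}$ and $U_{j+1}X$ are also fully normalized (again by the chain hypothesis), the constrained realization of Lemma \ref{eudu} is unique up to isomorphism, so I may identify $G(U_{j+1}\alpha_j,X\alpha_j)$ with $G(U_{j+1},X)$; defining $\theta_j$ as the resulting composite, restricted to $N_{G(U_j,X)}(U_{j+1})$, gives the desired isomorphism onto $N_{G(U_{j+1},X)}(U_j)$.

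It then remains to track centralizers through $\theta_j$. Here $C(U_j,X)=C_{G(U_j,X)}(U_{j+1})\le N_{G(U_j,X)}(U_{j+1})$ and $C(U_{j+1},X)=C_{G(U_{j+1},X)}(U_{j+2})$; since $U_j\le U_{j+2}$ the latter normalizes $U_j$, so both centralizers sit inside the domain and codomain of $\theta_j$ respectively. The identity for $H$ in Lemma \ref{aschex}(b) controls how the normal subgroup $H(U_{j+1},X)$ — whose Sylow $p$-subgroup is $N_T(U_{j+1})=U_{j+2}$ — is embedded, and matching this with the corresponding data inside $G(U_j,X)$ shows that centralizing $N_T(U_j)=U_{j+1}$ on one side corresponds exactly to centralizing the Sylow $U_{j+2}$ of $H(U_{j+1},X)$ on the other. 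This yields $C(U_j,X)\theta_j=C(U_{j+1},X)$, and the claim for $K$ is then automatic: $\theta_j$ is an isomorphism and $O^p$ is preserved by isomorphisms, so $K(U_j,X)\theta_j=O^p(C(U_j,X))\theta_j=O^p(C(U_{j+1},X))=K(U_{j+1},X)$.

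The main obstacle is the identification step in the second paragraph: reconciling the genuinely $\alpha_j$-twisted target $N_{G(U_{j+1}\alpha_j,X\alpha_j)}(U_j\alpha_j)$ produced by Lemma \ref{aschex} with the untwisted group $N_{G(U_{j+1},X)}(U_j)$, and then verifying that under this identification $\theta_j$ carries $C_{G(U_j,X)}(U_{j+1})$ \emph{onto} (rather than merely into) $C_{G(U_{j+1},X)}(U_{j+2})$. This is exactly the point at which the full force of the strongly $(\F,X)$-normalized condition is needed, since it is what permits re-normalizing at each rung of the chain so that the realizations at $U_j$ and $U_{j+1}$ can be compared.
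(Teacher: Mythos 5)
Your skeleton coincides with the paper's: take $\theta_j$ to be the isomorphism $\check{\alpha}$ of Lemma \ref{aschex}(b) applied with $U=U_j$ and $Q=U_{j+1}$ (the paper, like you, silently identifies $G(U_{j+1}\alpha,X\alpha)$ with $G(U_{j+1},X)$, and appealing to uniqueness of models of constrained systems is a reasonable way to justify that), then establish the one-step equalities $C(U_j,X)\theta_j=C(U_{j+1},X)$ and $K(U_j,X)\theta_j=K(U_{j+1},X)$ and compose. Your telescoping argument, the check that $C(U_{j+1},X)$ lies in the domain of $\theta_{j+1}$, and the observation that the $K$-statement follows from the $C$-statement because isomorphisms carry $O^p$ to $O^p$, are all correct and consistent with the paper.

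The genuine gap is exactly the step you concede at the end is ``the main obstacle'': you never prove the one-step equality for $C$. The sentence asserting that ``centralizing $N_T(U_j)=U_{j+1}$ on one side corresponds exactly to centralizing the Sylow $U_{j+2}$ of $H(U_{j+1},X)$ on the other'' is a restatement of what must be shown, not an argument; it does not follow formally from the normalizer identities of Lemma \ref{aschex}(b), since an element centralizing $U_{j+1}$ must be shown, after transport, to centralize the strictly larger group $U_{j+2}=N_T(U_{j+1})$. You also mislocate the difficulty: surjectivity is the easy direction, because $C(U_{j+1},X)$ centralizes $U_{j+2}\geq U_{j+1}$, so $C(U_{j+1},X)\theta_j^{-1}$ centralizes $U_{j+1}$ and hence lies in $C(U_j,X)=C_{G(U_j,X)}(U_{j+1})$; it is the inclusion $C(U_j,X)\theta_j\leq C(U_{j+1},X)$ that needs an idea. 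The paper supplies it from the structure of the models in Lemma \ref{eudu}: writing $G_{j+1}:=G(U_{j+1},X)$ and $H_{j+1}:=H(U_{j+1},X)$, one has $H_{j+1}\trianglelefteq G_{j+1}$ and $U_{j+1}$ is centric in $H_{j+1}$, so $C_{H_{j+1}}(U_{j+1})=Z(U_{j+1})$; since $C(U_j,X)\theta_j$ centralizes $U_{j+1}$ while $H_{j+1}$ normalizes it, a commutator computation gives $[C(U_j,X)\theta_j,H_{j+1}]\leq C_{H_{j+1}}(U_{j+1})=Z(U_{j+1})$, in particular $[C(U_j,X)\theta_j,U_{j+2}]\leq Z(U_{j+1})$, from which the paper deduces $C(U_j,X)\theta_j\leq C(U_{j+1},X)$. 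Some argument of this kind, exploiting normality of $H_{j+1}$ and centricity of $U_{j+1}$ in the model, is indispensable; without it your proposal leaves the central claim of the lemma unproved.
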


\begin{proof}
For each $0 \leq j \leq n-1,$ the isomorphism $\theta_j$ is the isomorphism $\check{\alpha}$ of Lemma \ref{aschex} (b) applied with $U=U_j$ and $Q=U_{j+1}$. It remains to prove that (\ref{kuj}) holds. Writing $$G_j:=G(U_j,X),  H_j:=H(U_j,X), C_j:=C(U_j,X) \mbox{ and  }  K_j:=K(U_j,X), $$ we have  $$[C_j\theta_j,U_{j+1}] \leq [C_j\theta_j,H_{j+1}] \leq C_{H_{j+1}}(U_{j+1}) = Z(U_{j+1}),$$ so that  $C_j\theta_j \leq C_{j+1}$. Since, $K_j\theta_j \leq C_{G_j}(U_{j+1})\theta_j$  also $K_j\theta_j \leq K_{j+1}$. Conversely $C_{j+1}\theta_j^{-1}$ and $K_{j+1}\theta_j^{-1}$ both centralize $U_{j+1}$ which implies that $C_j\theta_j=C_{j+1}$ and $K_j\theta_j=K_{j+1}$. This completes the proof.
\end{proof}

\begin{Cor}\label{coraschex}
Let $X \leq C_S(T)$ and $U \in \U$ be such that $XU$ is fully $N_\F(U)$-normalized. Then there exists $\alpha \in \Hom_\F(U,S)$ such that $\C(U\alpha)$ is strongly $(\F,X\alpha)$-normalized. 
\end{Cor}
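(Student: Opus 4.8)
The plan is to prove the stronger statement that the conjugating map can be taken to be defined on the whole of $N_S(U)$, and then to induct on the index $[T:U]$. Precisely, I would prove: for every $U \in \U$ and every $X \leq C_S(T)$ with $XU$ fully $N_\F(U)$-normalized, there exists $\alpha \in \Hom_\F(N_S(U),S)$ such that $\C(U\alpha)$ is strongly $(\F,X\alpha)$-normalized; restricting $\alpha$ to $U$ then yields the Corollary. The reason for enlarging the domain from $U$ to $N_S(U)$ is that it is exactly what is needed to keep control of the \emph{bottom} term of the chain when maps are composed in the inductive step. The base case is $U=T$, where $\C(T)$ has length $0$, the hypothesis that $XT$ is fully $N_\F(T)$-normalized is already the required condition, and one takes $\alpha=\Id_S \in \Hom_\F(N_S(T),S)$.

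For the inductive step I would assume $U \neq T$ and set $Q:=N_T(U)$, which properly contains $U$ since $T$ is a $p$-group. Lemma \ref{aschex}, applied to $U$ and $Q$, supplies $\beta \in \Hom_\F(N_S(Q),S)$ with $U\beta,Q\beta \in \U$, with $(XU)\beta$ fully $N_\F(U\beta)$-normalized and $(XQ)\beta$ fully $N_\F(Q\beta)$-normalized. Two elementary observations feed the induction. First, since $T$ is strongly $\F$-closed it is normal in $S$, so any element normalizing $U$ also normalizes $N_T(U)=Q$; hence $N_S(U) \leq N_S(Q)$ and $\beta$ is defined on $N_S(U)$. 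Second, as $U$ and $U\beta$ are both fully $\F$-normalized and $N_S(U)\beta \leq N_S(U\beta)$, comparing orders forces $N_S(U)\beta = N_S(U\beta)$; intersecting with $T$ gives $Q\beta = N_T(U\beta)$, so that the part of $\C(U\beta)$ lying above $Q\beta$ is exactly $\C(Q\beta)$.

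I would then apply the strengthened inductive hypothesis to $Q\beta \in \U$ and $X\beta$ — legitimate since $[T:Q\beta]=[T:Q] < [T:U]$ and $(X\beta)(Q\beta)=(XQ)\beta$ is fully $N_\F(Q\beta)$-normalized — obtaining $\gamma \in \Hom_\F(N_S(Q\beta),S)$ with $\C((Q\beta)\gamma)$ strongly $(\F,(X\beta)\gamma)$-normalized. Because $N_S(U)\beta = N_S(U\beta) \leq N_S(Q\beta)$, the composite $\alpha:=\beta\gamma$ is a well-defined element of $\Hom_\F(N_S(U),S)$, and I set $U\alpha = U\beta\gamma$. The higher terms of $\C(U\alpha)$ are then governed by the inductive hypothesis, once one checks that $N_T(U\alpha)=(Q\beta)\gamma$; this follows from $N_T(U\beta)=Q\beta$ together with the fact that $\gamma$ restricts to an isomorphism $N_S(U\beta)\to N_S(U\alpha)$ preserving $T$.

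The crux — and the step I expect to be the main obstacle — is showing that the bottom term survives: that $U\alpha$ is again fully $\F$-normalized and that $(XU)\alpha$ is again fully $N_\F(U\alpha)$-normalized. This is precisely where the strengthened hypothesis pays off. Because $\gamma$ is defined on all of $N_S(U\beta)$, not merely on $U\beta$, one has $N_S(U\beta)\gamma \leq N_S(U\alpha)$, and the full $\F$-normalization of $U\beta$ forces $N_S(U\beta)\gamma = N_S(U\alpha)$; hence $U\alpha$ is fully normalized and $\gamma$ carries $N_\F(U\beta)$ isomorphically onto $N_\F(U\alpha)$, so transporting the full $N_\F(U\beta)$-normalization of $(XU)\beta$ through this isomorphism yields the condition on $(XU)\alpha$. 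Had one produced maps defined only on $U$ itself, the restriction $\gamma|_{U\beta}$ could move the fully normalized $U\beta$ off a fully normalized subgroup and the bottom condition would collapse; enlarging every conjugating map to the normalizer is exactly what closes the induction.
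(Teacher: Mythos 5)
Your strategy coincides with the paper's: induct on the index $|T:U|$ (the paper phrases it as a minimal counterexample) and use Lemma \ref{aschex} in the inductive step to replace $(U,X)$ by a conjugate pair for which $Q=N_T(U)$ is also well placed. In fact your strengthening --- requiring the conjugating map to be defined on all of $N_S(U)$ rather than just on $U$ --- is a genuine improvement in rigor over the paper's write-up: the paper's phrase ``By induction $\C(Q\alpha)$ is strongly $(\F,X\alpha)$-normalized'' silently identifies the conjugate produced by the inductive hypothesis with $Q\alpha$ itself, whereas the inductive hypothesis only produces a further map, and composing maps defined merely on the subgroups themselves would lose the bottom conditions, exactly as you say. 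Your verifications that $N_S(U)\leq N_S(Q)$ (via normality of $T$), that $N_S(U)\beta=N_S(U\beta)$ and $Q\beta=N_T(U\beta)$ by order-counting and strong closure, and that full normalization of $U\alpha$ and of $(XU)\alpha$ transport along the induced isomorphism of normalizers, are all correct.

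There is, however, one genuine gap: when you invoke the inductive hypothesis for the pair $(Q\beta,X\beta)$, you check that $[T:Q\beta]<[T:U]$ and that $(XQ)\beta$ is fully $N_\F(Q\beta)$-normalized, but not the remaining hypothesis $X\beta\leq C_S(T)$. This is not automatic. The map $\beta$ is defined only on $N_S(Q)$, and in general $T\not\leq N_S(Q)$, so you cannot push the relation $[X,T]=1$ through $\beta$; what comes for free is only $X\beta\leq C_S(Q\beta)$, and $C_S(Q\beta)$ may properly contain $C_S(T)$. The condition is not cosmetic: the notion ``strongly $(\F,Y)$-normalized'', the hypotheses of Lemma \ref{aschex}, and indeed the conclusion of the Corollary itself are only defined for $Y\leq C_S(T)$, so without it neither your inductive call nor your final statement about $(\F,X\alpha)$ parses. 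The paper devotes a line to exactly this point, arguing
$$X\alpha \leq C_S(T)\alpha \leq C_S(Q)\alpha \leq C_S(Q\alpha)=C_S(T),$$
with the last equality extracted from (\ref{kuj}) in Lemma \ref{strongn}; alternatively, one can note that $X\alpha\leq C_S(T)$ is implicit in Lemma \ref{aschex}(b), since the group $G(U\alpha,X\alpha)$ appearing there is only defined (via Lemma \ref{eudu}(a)) when $X\alpha\leq C_S(T)$. Your induction needs such a justification inserted both at the inductive call (for $X\beta$) and at the end (for $X\alpha=X\beta\gamma$, e.g.\ by building the containment $X\alpha\leq C_S(T)$ into the strengthened inductive statement); with that insertion the argument is complete.
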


\begin{proof}
 Assuming the lemma is false, let $U$ be a counterexample with $n:=|T:U|$ as small as possible and write $Q:=N_T(U)$. Since $T$ is fully $\F$-normalized and $XT$ is fully $N_\F(T)$-normalized by assumption when $n=1$, we see that $n > 1$. 

By Lemma \ref{aschex} (a), there is some $\alpha \in \Hom_\F(N_S(Q),S)$ such that $U\alpha,Q\alpha \in \U$ and $(XU)\alpha$ and $(XQ)\alpha$ are fully $N_\F(U\alpha)$- and $N_\F(Q\alpha)$-normalized respectively. Furthermore, by (\ref{kuj}) in Lemma \ref{strongn} above, $$X\alpha \leq C_S(T)\alpha \leq C_S(Q)\alpha \leq C_S(Q\alpha)=C_S(T).$$

By induction $\C(Q\alpha)$ is strongly $(\F,X\alpha)$-normalized, and hence so is $\C(U\alpha)$, contradicting the minimal choice of $n$.
\end{proof}

From Definition \ref{defhyp}, we have the following result concerning the hyperfocal subgroup of $C_\F(T)$:

\begin{Prop}\label{hypcont}
For each $U \in \U$, $\mathfrak{hyp}(C_\F(T)) \leq 
C_{N_S(U)}(H(U))$.
\end{Prop}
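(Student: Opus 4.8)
The plan is to deduce the inclusion from the group-theoretic statement of Corollary \ref{corauto}, applied to the normal pair $(G(U,1),H(U))$ of constrained models, after first locating $\mathfrak{hyp}(C_\F(T))$ inside $K(U,1)=O^p(C(U,1))$. Indeed, the realization in Lemma \ref{eudu}(a) provides a normal pair of $p$-constrained groups with Sylow pair $(N_S(U),N_T(U))$ and $O_{p'}(G(U,1))=1$ (the model of a constrained system is taken with trivial $p'$-core), so Corollary \ref{corauto} yields $K(U,1)\cap N_S(U)=O^p(C_{G(U,1)}(N_T(U)))\cap N_S(U)\leq C_{G(U,1)}(H(U))$. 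Since $C_S(T)$ centralizes $T\geq N_T(U)$, we have $C_S(T)\leq C_{N_S(U)}(N_T(U))\leq N_S(U)$, and in particular $\mathfrak{hyp}(C_\F(T))\leq C_S(T)\leq N_S(U)$. Hence, once I show $\mathfrak{hyp}(C_\F(T))\leq K(U,1)$, I am done: $\mathfrak{hyp}(C_\F(T))\leq K(U,1)\cap N_S(U)\leq C_{G(U,1)}(H(U))\cap N_S(U)=C_{N_S(U)}(H(U))$.

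To prove $\mathfrak{hyp}(C_\F(T))\leq K(U,1)$, I would first treat $U=T$. Here $N_S(T)=S$ and $N_T(T)=T$, so $C(T,1)=C_{G(T,1)}(T)$, and I claim this group realizes $C_\F(T)$ on $C_S(T)$. This should follow from the construction of $\D(T)=N_\F^K(TC_S(T))$ together with the saturation of $C_\F(T)$, since every morphism of $C_\F(T)$ extends to a morphism of $\F$ fixing $T$, hence to a $K$-automorphism of $TC_S(T)$. Granting this, the group-realized case of the hyperfocal subgroup theorem \cite[Theorem 4.3]{BCGLO2} gives $\mathfrak{hyp}(C_\F(T))=C_S(T)\cap O^p(C(T,1))=C_S(T)\cap K(T,1)\leq K(T,1)$.

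For a general $U\in\U$ I would pass to an $\F$-conjugate admitting a strongly $(\F,1)$-normalized chain $\C(U)\colon U=U_0<U_1<\cdots<U_n=T$, which exists by Corollary \ref{coraschex} applied with $X=1$, the reduction to a single representative of each $\F$-class being justified because Lemma \ref{aschex}(b) identifies $H(U)$ with $H(U\alpha)$ while $\mathfrak{hyp}(C_\F(T))$ is strongly $\F$-closed. One then transports the case $U=T$ along the chain via the isomorphisms $\theta_0,\dots,\theta_{n-1}$ of Lemma \ref{strongn}: by (\ref{kuj}) the composite $\theta:=\theta_0\cdots\theta_{n-1}$ carries $K(U,1)$ onto $K(T,1)$, so $K(U,1)=K(T,1)\theta^{-1}$, and since $\mathfrak{hyp}(C_\F(T))\leq K(T,1)$ the claim reduces to showing that $\theta$ preserves $\mathfrak{hyp}(C_\F(T))$.

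The main obstacle is precisely this last point. Each $\theta_j$ is the map $\check\alpha$ of Lemma \ref{aschex}(b), induced by an $\F$-homomorphism whose effect on the subgroup $C_S(T)\leq C(U_j,1)$ must be pinned down. I expect to argue that, because the chain is strongly $(\F,1)$-normalized, the restriction of $\theta$ to $C_S(T)$ is realized by a morphism of $N_\F(T)$ that normalizes $T$ and hence induces an automorphism of the saturated system $C_\F(T)$; as $\mathfrak{hyp}(C_\F(T))$ is strongly $C_\F(T)$-closed, any such automorphism stabilizes it, giving $\mathfrak{hyp}(C_\F(T))\theta^{-1}=\mathfrak{hyp}(C_\F(T))\leq K(U,1)$. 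Verifying that $\theta$ genuinely restricts to a $C_\F(T)$-automorphism of $C_S(T)$, rather than merely sending $C_S(T)$ to a conjugate, is where the bookkeeping built into Lemma \ref{aschex} and Lemma \ref{strongn} has to be unwound carefully, and is the step I anticipate being most delicate.
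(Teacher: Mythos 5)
Your overall scaffolding (reduce to Corollary \ref{corauto}, settle $U=T$ via the hyperfocal subgroup theorem, then transport along a strongly normalized chain) parallels the paper, but the step you yourself flag as delicate is a genuine gap, and it is precisely the difficulty the paper's proof is engineered to avoid. The isomorphisms $\theta_j$ of Lemma \ref{strongn} are induced by $\F$-morphisms $\alpha$ defined only on normalizers $N_S(U_{j+1})$, and since $U_{j+1}=N_T(U_j)$ is in general a proper subgroup of $T$, these morphisms are not defined on $T$ at all. So there is no sense in which $\theta|_{C_S(T)}$ is ``realized by a morphism of $N_\F(T)$ that normalizes $T$'': you cannot conjugate the defining data of $C_\F(T)$ (extensions to $PT\to QT$ fixing $T$ pointwise) by $\theta$, and hence cannot conclude that $\theta$ induces an automorphism of the fusion system $C_\F(T)$, which is what stabilizing $\mathfrak{hyp}(C_\F(T))$ requires. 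The same problem undermines your earlier reduction to one representative per $\F$-class: you invoke strong $\F$-closure of $\mathfrak{hyp}(C_\F(T))$, but this is proved nowhere in the paper and is not obvious (a generic $\F$-morphism need not even carry $C_S(T)$ into $C_S(T)$); the only invariance the paper establishes, in Corollary \ref{c: hyps}, is under $\Aut_\F(TC_S(T))$, whose elements \emph{do} normalize $T$ and $C_S(T)$ and therefore act on $C_\F(T)$. Your base case is also too quick: the claim that every morphism of $C_\F(T)$ extends to a $K$-automorphism of $TC_S(T)$ needs an extension-axiom argument with a full-normalization hypothesis, which is why the paper only proves the pointwise identity (\ref{opcft}) for subgroups $X$ such that $XT$ is fully $N_\F(T)$-normalized.

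The paper sidesteps all of this by never transporting the subgroup $\mathfrak{hyp}(C_\F(T))$ at all. It fixes a single generating pair, $X\leq C_S(T)$ and $\beta\in O^p(\Aut_{C_\F(T)}(X))$, and chooses the chain \emph{adapted to that} $X$: this is the entire reason Lemma \ref{aschex}, Lemma \ref{strongn} and Corollary \ref{coraschex} are developed for strongly $(\F,X)$-normalized chains with arbitrary $X\leq C_S(T)$, rather than only for $X=1$ (Aschbacher's original notion). The identity (\ref{opcft}), $O^p(\Aut_{C_\F(T)}(X))=\Aut_{K(T,X)}(X)$, together with $\Aut_{K(T,X)}(X)=\Aut_{K(U,X)}(X)$ from Lemma \ref{strongn}, converts $\beta$ into conjugation by an element of $K(U,X)$, so the commutator $[X,\beta]$ lands in $\mathfrak{hyp}(C_{\D(U,X)}(N_T(U)))$; the hyperfocal subgroup theorem, Corollary \ref{corauto} and Lemma \ref{eudu}(b) then finish. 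Because the groups $G(U_j,X)$ carry $X$ along with them, only automorphisms of $X$ need to be compared across the chain, never the subgroup $\mathfrak{hyp}(C_\F(T))$. By specializing to $X=1$ you discard exactly the device that makes the transport step legitimate, and filling your gap would in effect force you to reintroduce it.
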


\begin{proof}
Let $X \leq C_S(T)$ and  $\beta \in O^p(\Aut_{C_\F(T)}(X))$. It suffices to show that $[X,\beta] \leq C_{N_S(U)}(H(U))$. By Corollary \ref{coraschex} there exists $\alpha \in \Hom_\F(N_S(Q),S)$ such that $\C(U\alpha)$ is strongly $(\F,X\alpha)$-normalized with $Q:=N_T(U)$. If $[X\alpha,\beta^\alpha] \leq C_{N_S(U\alpha)}(H(U\alpha))$ then $$[X,\beta]=[X\alpha,\beta^\alpha]\check{\alpha}^{-1} \leq C_{N_S(U\alpha)}(H(U\alpha))\check{\alpha}^{-1}= C_{N_S(U)}(H(U)).$$ Thus, on replacing $(U,X)$ by $(U\alpha,X\alpha)$ if necessary, we may assume that $\C(U)$ is strongly $(\F,X)$-normalized.
Suppose we have shown that \begin{equation}\label{opcft} O^p(\Aut_{C_\F(T)}(X))=\Aut_{K(T,X)}(X). \end{equation} Then, since $X \leq N_S(U_j)$ for each $0 \leq j \leq n$, Lemma \ref{strongn} implies that $\Aut_{K(T,X)}(X)=\Aut_{K(U,X)}(X)$ and $$[X,\beta] \leq \mathfrak{hyp}(C_{\D(U,X)}(N_T(U))).$$ Now, the hyperfocal subgroup theorem (\cite[Theorem 4.3]{BCGLO2}) implies that this latter group is equal to $O^p(C_{G(U,X)}(N_T(U))) \cap N_S(UX)$, which is contained in  $C_S(H(U,X))$ by Corollary \ref{corauto}. Since  $$C_S(H(U,X))=C_S(H(U,1))=C_S(H(U))$$ by Lemma \ref{eudu} (b), the result follows.

It thus remains to prove (\ref{opcft}). Clearly $\Aut_{K(T,X)}(X) \leq O^p(\Aut_{C_\F(T)}(X))$ by definition. Conversely, each $p'$-automorphism $\alpha \in \Aut_{C_\F(T)}(X)$ extends to a morphism $\bar{\alpha} \in \Aut_\F(C_S(XT)XT)$ which fixes $T$ (note that $XT$ is fully $N_\F(T)$-normalized), which we may assume is also of $p'$-order. Thus $\bar{\alpha}=c_g$ for some $g \in K(T,X)$ and $$O^p(\Aut_{C_\F(T)}(X)) \leq \Aut_{K(T,X)}(X),$$ as required.
\end{proof}

\begin{Cor}\label{c: hyps}
$\mathfrak{hyp}(C_\F(T)) \leq C_S(\E).$
\end{Cor}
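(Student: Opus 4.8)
The plan is to read off the result by combining Proposition \ref{hypcont} with the explicit formula for $C_S(\E)$ supplied by Theorem \ref{csegroup}. Since that theorem expresses
$$C_S(\E)= \bigcap_{\varphi \in \Aut_\F(TC_S(T))} \left( \bigcap_{U \in \U} C_S(H(U)) \right) \varphi,$$
it suffices to show that $\mathfrak{hyp}(C_\F(T))$ is contained in every term of this double intersection. The inner intersection is immediate from Proposition \ref{hypcont}: for each $U \in \U$ we have $\mathfrak{hyp}(C_\F(T)) \leq C_{N_S(U)}(H(U)) \leq C_S(H(U))$, and hence $\mathfrak{hyp}(C_\F(T)) \leq \bigcap_{U \in \U} C_S(H(U))$. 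Note also that $\mathfrak{hyp}(C_\F(T)) \leq C_S(T) \leq TC_S(T)$, so that $\mathfrak{hyp}(C_\F(T))\varphi$ is meaningful for each $\varphi \in \Aut_\F(TC_S(T))$.

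The crux of the argument is therefore to prove that $\mathfrak{hyp}(C_\F(T))$ is invariant under each $\varphi \in \Aut_\F(TC_S(T))$. To this end I would first use that, since $\E$ is normal in $\F$, the subgroup $T$ is strongly $\F$-closed, so $T\varphi=T$; because $C_S(T)=C_{TC_S(T)}(T)$, it follows that $\varphi$ restricts to an automorphism of $C_S(T)$. I would then verify that conjugation by this restriction carries the fusion system $C_\F(T)$ to itself: a morphism $\psi$ of $C_\F(T)$ extends to some $\bar{\psi} \in \Hom_\F$ with $\bar{\psi}|_T=\Id_T$, and since $\varphi$ preserves $T$ setwise the conjugate $\varphi^{-1}\circ\bar{\psi}\circ\varphi$ again restricts to the identity on $T$, so $\varphi^{-1}\circ\psi\circ\varphi$ is once more a morphism of $C_\F(T)$. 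As $\mathfrak{hyp}(C_\F(T))$ is generated (Definition \ref{defhyp}) by elements $g^{-1}\cdot g\beta$ with $\beta \in O^p(\Aut_{C_\F(T)}(P))$, and conjugation by $\varphi$ sends such a generator to $(g\varphi)^{-1}\cdot(g\varphi)\beta^\varphi$ with $\beta^\varphi \in O^p(\Aut_{C_\F(T)}(P\varphi))$, I obtain $\mathfrak{hyp}(C_\F(T))\varphi=\mathfrak{hyp}(C_\F(T))$.

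Granting this invariance, the assembly is formal: for each $\varphi \in \Aut_\F(TC_S(T))$ we have $\mathfrak{hyp}(C_\F(T)) = \mathfrak{hyp}(C_\F(T))\varphi \leq \left( \bigcap_{U \in \U} C_S(H(U)) \right)\varphi$, and intersecting over all such $\varphi$ gives $\mathfrak{hyp}(C_\F(T)) \leq C_S(\E)$. I expect the main obstacle to be the middle step, namely checking carefully that $\varphi$ induces an automorphism of the centralizer fusion system $C_\F(T)$ — that is, that conjugation preserves the defining property of extending to an $\F$-morphism which restricts to the identity on $T$. Once this functoriality, and the consequent $\varphi$-invariance of the characteristic subgroup $\mathfrak{hyp}(C_\F(T))$, are in place, everything else reduces to elementary manipulation of the intersection in Theorem \ref{csegroup}.
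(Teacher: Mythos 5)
Your proposal is correct and follows essentially the same route as the paper: combine the intersection formula of Theorem \ref{csegroup} with Proposition \ref{hypcont}, the only remaining point being the invariance of $\mathfrak{hyp}(C_\F(T))$ under $\Aut_\F(TC_S(T))$. The paper dispatches this last step in one line (such automorphisms normalize $O^p(\Aut_{C_\F(T)}(P))$ for each $P \leq C_S(T)$), whereas you spell out the underlying reason — conjugation by $\varphi$ preserves $C_\F(T)$ because $T$ is strongly $\F$-closed — which is a correct elaboration of the same idea rather than a different argument.
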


\begin{proof}
This follows immediately from Theorem \ref{csegroup} and Proposition \ref{hypcont}, provided $\mathfrak{hyp}(C_\F(T))$ is left invariant by elements of $\Aut_\F(TC_S(T))$. But such elements clearly normalize  $O^p(\Aut_{C_\F(T)}(P))$ for each $P \leq C_S(T)$, as needed.
\end{proof}

\begin{Thm}
Let $T \leq S$ be finite $p$-groups and $((S,\F),(T,\E))$ be a normal pair of saturated fusion systems. There is a unique saturated subsystem $C_\F(\E)$ on $C_S(\E)$ contained in $C_\F(T)$ at index a power of $p$.
\end{Thm}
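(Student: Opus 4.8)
The plan is to deduce this theorem as an essentially formal consequence of the machinery assembled earlier in the section, specifically by verifying the hypotheses of the hyperfocal subgroup theorem \cite[Theorem 4.3]{BCGLO2}. That theorem asserts precisely that whenever $\mathfrak{hyp}(C_\F(T)) \leq R \leq C_S(T)$, there is a \emph{unique} saturated subsystem of $C_\F(T)$ on $R$ contained in $C_\F(T)$ at index a power of $p$. So the entire task reduces to exhibiting the containments
$$\mathfrak{hyp}(C_\F(T)) \leq C_S(\E) \leq C_S(T),$$
which is the statement of Theorem B, and then applying the cited theorem with $R = C_S(\E)$.

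First I would record the right-hand inclusion $C_S(\E) \leq C_S(T)$. This is immediate from Definition \ref{csedef}: if $g \in C_S(\E)$ then $\E \subseteq C_\F(\langle g\rangle)$, and since $\E$ is a system on all of $T$, every element of $T$ is centralized by $g$ up to $\F$-conjugacy in a way forcing $g \in C_S(T)$; more directly, the definition of $C_S(\E)$ as given by Aschbacher and recalled in the introduction is built as a subgroup of $C_S(T)$ from the outset. Next I would invoke Corollary \ref{c: hyps}, which has already been established and states exactly the left-hand inclusion $\mathfrak{hyp}(C_\F(T)) \leq C_S(\E)$. That corollary in turn rests on the local characterization of $C_S(\E)$ in Theorem \ref{csegroup} combined with the subgroup-by-subgroup containment of Proposition \ref{hypcont}. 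With both inclusions in hand, $C_S(\E)$ is a subgroup $R$ satisfying $\mathfrak{hyp}(C_\F(T)) \leq R \leq C_S(T)$, which is the precise input required.

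Finally I would apply the hyperfocal subgroup theorem to the saturated fusion system $C_\F(T)$ over the $p$-group $C_S(T)$, taking $R := C_S(\E)$. This produces a unique saturated subsystem on $C_S(\E)$ contained in $C_\F(T)$ at index a power of $p$; we name this subsystem $C_\F(\E)$, and uniqueness is inherited directly from the uniqueness clause of that theorem. One point worth checking explicitly is that $C_\F(T)$ is itself saturated, so that \cite[Theorem 4.3]{BCGLO2} applies; this is standard, since $C_\F(T)$ is a centralizer fusion system of the fully normalized subgroup $T$ (indeed $T$ is strongly $\F$-closed, being the support of the normal subsystem $\E$), and such centralizers are saturated.

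The genuinely substantive content of the theorem has therefore already been discharged in Corollary \ref{c: hyps} and the results feeding into it; the step I expect to be the main obstacle is not in this final deduction at all but in Proposition \ref{hypcont}, whose proof required the group-theoretic input of Corollary \ref{corauto} (and hence Gross's Theorem \ref{autcentsylow}) together with the delicate chain-transport arguments of Lemma \ref{strongn} and Corollary \ref{coraschex}. Given those, the present theorem is a clean application of a black-box existence-and-uniqueness result, so the only care needed here is to confirm that the hypotheses of \cite[Theorem 4.3]{BCGLO2} are met verbatim and to name the resulting subsystem.
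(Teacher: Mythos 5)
Your proposal is correct and takes essentially the same route as the paper: the paper's proof also deduces the theorem immediately from Corollary \ref{c: hyps} together with the existence-and-uniqueness theorem for subsystems of $p$-power index, cited there as \cite[Theorem I.7.4]{AKO}, which is the same result as the hyperfocal subgroup theorem \cite[Theorem 4.3]{BCGLO2} that you invoke. The extra verifications you include (that $C_S(\E) \leq C_S(T)$ and that $C_\F(T)$ is saturated) are routine and are left implicit in the paper.
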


\begin{proof}
This follows from Corollary \ref{c: hyps} and \cite[Theorem I.7.4]{AKO}.
\end{proof}

\section{Open questions}\label{s:zstar}
The $Z^*$-theorem of Glauberman (\cite[Theorem 1]{Gl}), which has since been shown to hold for all primes (\cite[Theorem 1]{Art}),  has the following elementary statement for fusion systems:

\begin{Thm}\label{zstar}
Let $G$ be a finite group with $S \in \Syl_p(G)$ If $O_{p'}(G)=1$ then $Z(\F_S(G))=Z(G)$. 
\end{Thm}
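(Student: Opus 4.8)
The plan is to prove both inclusions after pinning down the group-theoretic meaning of $Z(\F_S(G))$. Recall that the center of a saturated fusion system $\F$ on $S$ is the subgroup of $Z(S)$ consisting of those $x$ fixed by every $\F$-morphism whose source contains $x$ (see \cite[Part I]{AKO}). Specializing to $\F=\F_S(G)$, where the morphisms are restrictions of conjugation maps $c_g$, an element $x$ is fixed by all such maps precisely when its $G$-fusion inside $S$ is trivial. I would first record the resulting description
$$Z(\F_S(G)) = \{x \in Z(S) \mid x^G \cap S = \{x\}\},$$
noting that the condition $x^G \cap S = \{x\}$ already forces $x \in Z(S)$ (for $s \in S$ one has $x^s \in x^G \cap S$), so the two formulations agree.

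For the inclusion $Z(G) \subseteq Z(\F_S(G))$, which needs no deep input, I would use the hypothesis $O_{p'}(G)=1$ to observe that $Z(G)$ is a $p$-group: its $p'$-part is a normal $p'$-subgroup, hence contained in $O_{p'}(G)=1$. Being a normal $p$-subgroup, $Z(G) \leq O_p(G) \leq S$, and each $z \in Z(G)$ satisfies $z \in Z(S)$ and $z^G=\{z\}$, so $z^G \cap S = \{z\}$ and $z \in Z(\F_S(G))$ by the description above.

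The reverse inclusion $Z(\F_S(G)) \subseteq Z(G)$ is where the real content enters, via the $Z^*$-theorem (\cite[Theorem 1]{Gl} for $p=2$, \cite[Theorem 1]{Art} in general). Given $x \in Z(\F_S(G))$, the displayed description says $x$ is an isolated $p$-element of $G$, \ie $x^G \cap S = \{x\}$ (equivalently $x^G \cap C_S(x)=\{x\}$, since $x \in Z(S)$ gives $C_S(x)=S$). The $Z^*$-theorem then yields $x \in Z^*(G)$, and $O_{p'}(G)=1$ gives $Z^*(G) = Z^*(G)/O_{p'}(G) = Z(G/O_{p'}(G)) = Z(G)$, so $x \in Z(G)$. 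Combining the two inclusions gives $Z(\F_S(G)) = Z(G)$.

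The only nontrivial ingredient is the classical $Z^*$-theorem itself, which for odd primes rests on the classification of finite simple groups; once it is taken as a black box in the isolated-element form above, the argument is a routine translation between fusion-theoretic and group-theoretic language. The one point I would check carefully is that the cited form applies to $p$-elements of $Z(S)$ of arbitrary order, and not merely to elements of order $p$ — this is precisely the generality in which the isolated-element version is stated, so it causes no difficulty, but it is worth flagging since the classical formulation is usually phrased for involutions.
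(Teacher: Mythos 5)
Your proof is correct and coincides with the paper's approach: the paper gives no independent proof of this theorem, presenting it as the elementary fusion-theoretic reformulation of the $Z^*$-theorem (citing Glauberman for $p=2$ and the extension to all primes), and your argument is exactly that translation — identifying $Z(\F_S(G))$ with the isolated elements of $Z(S)$ and invoking the isolated-element form of the $Z^*$-theorem together with $O_{p'}(G)=1$. The point you flag about $p$-elements of arbitrary order is legitimate but harmless: the general statement follows from the order-$p$ case by induction on the order of $x$ (pass to $G/\langle z\rangle$ with $z=x^{p^{n-1}}\in Z(G)$, note that $O_{p'}(G/\langle z\rangle)=1$ and that the image of $x$ is still isolated and central in the Sylow subgroup, then rule out $x^g=xz^i$ since any such element lies in $x^G\cap C_G(x)=\{x\}$).
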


Currently (when $p$ is odd) the proof relies on the classification of finite simple groups, although in recent years there has been some interest in finding a classification-free proof (see \cite{Wa}). One could hope to generalize Theorem \ref{zstar} as follows:

\begin{Conj}\label{cseconj}
Let $((S,\F),(T,\E))$ be a normal pair of fusion systems realized by a normal pair $(G,H)$ of finite groups. If $O_{p'}(H)=1$ then $C_S(\E)=C_G(H) \cap S$.
\end{Conj}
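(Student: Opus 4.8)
The plan is to prove the two inclusions of Conjecture \ref{cseconj} separately. The containment $C_G(H) \cap S \subseteq C_S(\E)$ is elementary and does not use the hypothesis $O_{p'}(H) = 1$: if $g \in C_G(H) \cap S$ then $g$ centralises $T = H \cap S$, so $g \in C_S(T)$, and since every morphism of $\E = \F_T(H)$ is (a restriction of) conjugation by some $h \in H$ with $g^h = g$, the map $c_h$ extends to an $\F$-morphism fixing $g$. Hence $\E \subseteq C_\F(\langle g \rangle)$ and $g \in C_S(\E)$ by Definition \ref{csedef}. The real content is the reverse inclusion $C_S(\E) \subseteq C_G(H)$, which I would reduce to the $Z^*$-theorem (Theorem \ref{zstar}).

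For the reverse inclusion I would first note that both sides are strongly $\F$-closed --- $C_S(\E)$ by Theorem \ref{csegroup} and $C_G(H) \cap S$ because $C_G(H) \trianglelefteq G$ --- so, replacing $g$ by an $\F$-conjugate if necessary, I may assume $g \in C_S(\E)$ is fully $\F$-centralised, whence $C_\F(\langle g \rangle) = \F_{C_S(g)}(C_G(g))$. Since $H \trianglelefteq G$, conjugation gives a homomorphism $\rho \colon C_G(T) \to \Aut(H)$ with kernel $C_G(H)$ and image in $C_{\Aut(H)}(T) = \{\alpha \mid \alpha|_T = \Id_T\}$, and the goal becomes exactly to show that the $p$-element $\rho(g)$ is trivial. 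Unwinding $g \in C_S(\E)$ through $C_\F(\langle g \rangle) = \F_{C_S(g)}(C_G(g))$ and Alperin's fusion theorem for $\E$ shows that every $\E$-automorphism of a subgroup of $T$ is realised by an element of $C_G(g)$; applying $\rho$, this says that $\rho(g)$ commutes with a family of automorphisms of $H$ realising all of the $H$-fusion on $T$. Thus $\rho(g)$ is a $p$-element of $C_{\Aut(H)}(T)$ that is isolated with respect to $\E$-fusion, and the $Z^*$-theorem is precisely the tool converting such isolation into centrality, forcing $\rho(g) = \Id_H$ and hence $g \in C_G(H)$.

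To make the appeal to $Z^*$ rigorous I would first dispose of the $p$-constrained case and then reduce to it. When $H$ is $p$-constrained, $C_{\Aut(H)}(T)$ has a normal $p$-complement by Gross (Theorem \ref{autcentsylow}), and I would show that the isolation property places $\rho(g)$ inside this complement; being a $p$-element, it is then trivial. This step is CFSG-free and is in the same spirit as Corollary \ref{corauto}. For general $H$ I would attempt to reduce to the constrained situation via Aschbacher's local characterisation: by Theorem \ref{csegroup}, $C_S(\E) = \bigcap_{\varphi} \big( \bigcap_{U \in \U} C_S(H(U)) \big)\varphi$, and by Lemma \ref{eudu}(a) each pair $(G(U), H(U))$ is $p$-constrained, so the constrained case identifies, locally, fusion-theoretic centralisation of $H(U)$ with genuine centralisation of the model $H(U)$, which one can then transport between the members of a strongly $(\F,X)$-normalised chain using Lemma \ref{strongn} and Corollary \ref{coraschex}.

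The hard part --- and the reason the statement is posed as a conjecture rather than a theorem --- will be the passage from this local, constrained data back to the global centraliser $C_G(H) \cap S$ of the possibly non-constrained subgroup $H$. Gluing the local identities along strongly normalised chains controls the $p$-part, exactly as in the proof that $\mathfrak{hyp}(C_\F(T)) \leq C_S(\E)$, but recognising that an element centralising every local model $H(U)$ must in fact centralise $H$ is genuinely a $Z^*$-phenomenon for non-constrained $H$, and for odd $p$ this currently rests on the classification of finite simple groups. I expect the decisive difficulty to be the construction of an auxiliary group $W$ with $O_{p'}(W) = 1$, assembled from $C_G(g)$ and $H$, in which $\rho(g)$ becomes fusion-central so that Theorem \ref{zstar} applies directly: the naive choice $W = H\langle g \rangle$ makes the isolation condition vacuous (conjugation of $g$ inside $H\langle g \rangle$ moves $g$ exactly when $g$ fails to centralise $H$), so one is forced to exploit the ambient $\F$-fusion recorded by $C_G(g)$ rather than the internal fusion of $H\langle g \rangle$.
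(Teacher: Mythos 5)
The statement you set out to prove is posed in the paper as Conjecture \ref{cseconj}, in the closing section on open questions: the paper contains no proof of it, only the remarks that no counterexample is known and that it can be verified for $(G,H)=(S_n,A_n)$, $n\geq 6$, $p=2$. So there is no proof to compare yours against; the question is whether your argument settles the conjecture, and it does not. Your first paragraph is correct and complete: for $g\in C_G(H)\cap S$, each morphism $c_h|_P$ of $\E$ extends to $c_h|_{P\langle g\rangle}$, which fixes $g$ because $h$ and $g$ commute, so $\E\subseteq C_\F(\langle g\rangle)$; this direction indeed needs neither $O_{p'}(H)=1$ nor any deep input. The reformulation of the hard inclusion is also sound: since $\Ker(\rho)=C_G(H)$, one must show the $p$-element $\rho(g)\in C_{\Aut(H)}(T)$ is trivial.

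The gap is at exactly the decisive step, and you flag it yourself. A $p$-element of $C_{\Aut(H)}(T)$ can perfectly well be nontrivial even when $H$ is $p$-constrained with $O_{p'}(H)=1$ (take $H=A_4$, $T\in \Syl_2(H)$; then $C_{\Aut(H)}(T)\cong T$), so Gross's normal $p$-complement by itself kills nothing: everything rests on converting the fusion-theoretic isolation of $g$ into membership of $\rho(g)$ in the $p'$-part, equivalently into an application of Theorem \ref{zstar} inside a suitable group $W$ with $O_{p'}(W)=1$ in which $g$ becomes fusion-central. In the constrained case you only assert that ``the isolation property places $\rho(g)$ inside this complement''; Corollary \ref{corauto} gives $O^p(C_G(T))\cap S\leq C_G(H)$, but nothing in your sketch shows $g\in O^p(C_G(T))$, so even that case is not closed. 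In the general case you concede that the naive choice $W=H\langle g\rangle$ fails and that the correct $W$ would have to be ``assembled from $C_G(g)$ and $H$,'' but no construction is given, and the passage from the local constrained models $H(U)$ of Theorem \ref{csegroup} back to the global centralizer $C_G(H)$ is precisely the content of the open problem. In short: the easy half is proved, the strategy for the hard half is reasonable and consistent with how one would hope to attack the conjecture, but the conjecture remains open after your attempt.
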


So far, all attempts to construct a counterexample to this conjecture have failed. For example, via elementary calculations, it is true when $(G,H)=(S_n,A_n)$, $n \geq 6$ and $p=2$. Pushing this slightly further, we can ask whether the following is true:

\begin{Conj}\label{cseconj2}
Let $((S,\F),(T,\E))$ be a normal pair of fusion systems realized by a normal pair $(G,H)$ of finite groups. If $O_{p'}(H)=1$ then $$C_\F(\E)=\F_{C_S(H)}(C_G(H))$$
\end{Conj}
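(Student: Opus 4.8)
The plan is to combine the characterization of $C_\F(\E)$ furnished by Theorem B with the standard description of subsystems of $p$-power index of a group fusion system, thereby reducing the conjecture to a single group-theoretic assertion about the automorphisms that $C_G(T)$ induces on $H$. First I would record the elementary inclusions of the underlying $p$-groups. Since $T \leq H$ we have $C_G(H) \leq C_G(T)$ and $C_S(H) = C_G(H) \cap S \leq C_S(T)$; as $C_G(H) \trianglelefteq G$ this also gives $C_S(H) \in \Syl_p(C_G(H))$. If $g \in C_S(H)$ then conjugation by any $h \in H$ fixes $g$, so every morphism of $\E = \F_T(H)$ is realized in $C_G(g)$ and hence $\E \subseteq C_\F(\langle g\rangle)$; by Definition \ref{csedef} this shows $C_S(H) \leq C_S(\E)$. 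The reverse inclusion $C_S(\E) \leq C_S(H)$ is precisely the content of Conjecture \ref{cseconj}, which I must therefore assume (or establish) as a prerequisite, since it is exactly what guarantees that $C_\F(\E)$ and $\F_{C_S(H)}(C_G(H))$ are saturated fusion systems on the \emph{same} $p$-group $C_S(\E) = C_S(H)$.

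Next, since $T$ is normal in $\F$ it is fully $\F$-centralized, so $C_\F(T) = \F_{C_S(T)}(C_G(T))$ with $C_S(T) \in \Syl_p(C_G(T))$ (see \cite{AKO}). By the hyperfocal subgroup theorem $\mathfrak{hyp}(C_\F(T)) = C_S(T) \cap O^p(C_G(T))$, and the classification of $p$-power-index subsystems (\cite[Theorem 4.3]{BCGLO2}), in the realizable case, identifies the unique saturated subsystem of $C_\F(T)$ of $p$-power index on $C_S(H)$ as $\F_{C_S(H)}(K_0)$, where $K_0 := C_S(H)\,O^p(C_G(T))$; here one uses $\mathfrak{hyp}(C_\F(T)) \leq C_S(\E) = C_S(H)$, which holds by Corollary \ref{c: hyps} and Conjecture \ref{cseconj}. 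By Theorem B this subsystem \emph{is} $C_\F(\E)$. As $\F_{C_S(H)}(C_G(H))$ is visibly a saturated subsystem of $C_\F(T) = \F_{C_S(T)}(C_G(T))$ on $C_S(H)$ (because $C_G(H) \leq C_G(T)$), the conjecture reduces to the equality $\F_{C_S(H)}(C_G(H)) = \F_{C_S(H)}(K_0)$, and I claim this follows from the single group-theoretic statement
\begin{equation*}
O^p(C_G(T)) \leq C_G(H). \tag{$\star$}
\end{equation*}
Indeed, $(\star)$ forces $C_G(T)/C_G(H)$ to be a $p$-group (a quotient of the maximal $p$-quotient $C_G(T)/O^p(C_G(T))$); combined with $C_S(H) \in \Syl_p(C_G(H))$ this yields $C_G(H) = C_S(H)\,O^p(C_G(T)) = K_0$, whence the two group fusion systems coincide.

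The crux, and the step I expect to be the main obstacle, is $(\star)$: one must show that $C_G(T)$ induces only a $p$-group of automorphisms on $H$, equivalently that every $p'$-element of $C_G(T)$ centralizes $H$. This is a genuine strengthening of Gross's theorem (Theorem \ref{autcentsylow}): the latter shows only that $C_{\Aut(H)}(T)$ has a normal $p$-complement — equivalently, it yields only $O^p(C_G(T)) \cap S \leq C_G(H)$ (Corollary \ref{corauto}), the hyperfocal-level statement already used above — and moreover it requires $H$ to be $p$-constrained. Passing from ``normal $p$-complement'' to ``$p$-group'', and from the constrained case to an arbitrary $H$ with $O_{p'}(H) = 1$, is exactly the $Z^*$-type phenomenon underlying Conjecture \ref{cseconj}: a $p'$-automorphism of $H$ fixing the Sylow subgroup $T$ pointwise ought to be trivial when $O_{p'}(H) = 1$, but establishing this in general appears to require the classification of finite simple groups. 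Thus the fusion-theoretic content of Conjecture \ref{cseconj2} is, modulo the reductions above, concentrated entirely in the assertion $(\star)$, which together with Conjecture \ref{cseconj} I regard as the essential difficulty.
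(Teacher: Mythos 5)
You should first be aware that the statement you were given is not a theorem of the paper at all: it is Conjecture \ref{cseconj2}, posed in \S \ref{s:zstar} as an open problem. The paper contains no proof of it, only the closing remark that it would suffice to show that $\F_{C_S(H)}(C_G(H))$ is a subsystem of index a power of $p$ in $C_\F(T)$. So there is no proof to compare yours against, and your proposal has to be judged as an attempt on an open conjecture.

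Judged that way, your reductions are correct but the write-up is not a proof: it rests on two statements you never establish, namely Conjecture \ref{cseconj} (needed to know $C_S(\E)=C_S(H)=C_G(H)\cap S$, so that the two fusion systems live on the same $p$-group) and your assertion $(\star)$ that $O^p(C_G(T))\leq C_G(H)$. The chain of deductions \emph{from} those two assumptions is sound: the easy inclusion $C_S(H)\leq C_S(\E)$; the identification $C_\F(T)=\F_{C_S(T)}(C_G(T))$ (valid because $T$, being strongly closed, is fully centralized); the identification, via the hyperfocal subgroup theorem and \cite[Theorem 4.3]{BCGLO2}, of the unique $p$-power-index subsystem of $C_\F(T)$ on $C_S(H)$ as $\F_{C_S(H)}\bigl(C_S(H)\,O^p(C_G(T))\bigr)$; and the Dedekind argument by which $(\star)$ forces $C_G(H)=C_S(H)\,O^p(C_G(T))$, after which Theorem B closes the loop. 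Note, moreover, that what you have produced is essentially a group-level sharpening of the paper's own sufficient condition: granted Conjecture \ref{cseconj}, your $(\star)$ is exactly what makes $\F_{C_S(H)}(C_G(H))$ a $p$-power-index subsystem of $C_\F(T)$. So the net effect of your argument is to relocate the conjecture into the two statements you yourself flag as the ``essential difficulty'' --- Conjecture \ref{cseconj} and the $Z^*$-type assertion that every $p'$-element of $C_G(T)$ centralizes $H$ when $O_{p'}(H)=1$ --- rather than to resolve it. That is the genuine gap, and it is the entire content of the conjecture: as you correctly observe, Gross's theorem (and hence Corollary \ref{corauto}) gives only a normal $p$-complement in $C_{\Aut(H)}(T)$, and only for $p$-constrained $H$, and passing beyond that appears to require classification-strength input. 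Your analysis of where the difficulty sits is accurate and agrees with the paper's; the conjecture simply remains open.
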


To prove Conjecture \ref{cseconj2}, it would be enough to show that $\F_{C_S(H)}(C_G(H))$ is a fusion system of $p$-power index in $C_\F(T)$.

\bibliography{refs}        
\bibliographystyle{plain}
\end{document}